\documentclass[11pt,leqno,a4paper,twoside,final]{article}

\usepackage[T1]{fontenc}

\usepackage{exscale,ifthen,amsfonts,amsmath,amscd,amssymb,amsthm}

\usepackage[square]{natbib}


\allowdisplaybreaks[2]

\DeclareMathOperator{\cl}{cl}

\DeclareMathOperator{\Prb}{\mathbf{P}}
\DeclareMathOperator{\Mean}{\mathbf{E}}


\theoremstyle{definition}
\newtheorem{example}{Example}[section]
\theoremstyle{plain}

\newtheorem{lemma}{Lemma}[section]

\newtheorem{proposition}{Proposition}[section]
\newtheorem{remark}{Remark}[section]

\newtheorem{theorem}{Theorem}[section]

\theoremstyle{remark}

\numberwithin{equation}{section}

\newcommand{\s}{\sigma}
\newcommand{\E}{\mathbf{E}}
\newcommand{\PR}{\mathbb{P}}
\newcommand{\N}{\mathbb{N}}
\newcommand{\R}{\mathbb{R}}

\newcommand{\B}{\mathcal{B}}
\newcommand{\F}{\mathcal{F}}


\newcommand{\trans}[1]{\ensuremath{{#1}^\mathsf{T}}}

\newcommand{\bcdot}{\ensuremath{\boldsymbol{\cdot}}}

\renewcommand{\phi}{\varphi}
\renewcommand{\epsilon}{\varepsilon}


\begin{document}

\title{On large deviations for small noise It{\^o} processes}

\author{Alberto Chiarini\thanks{
Department of Mathematics, Technische Universit{\"a}t Berlin, Stra{\ss}e des 17.~Juni 136, 10623 Berlin, Germany. Email: chiarini@math.tu-berlin.de}
\ and Markus Fischer\thanks{
Department of Mathematics, Universit{\`a} di Padova, via Trieste 63, 35121 Padova, Italy. Email: fischer@math.unipd.it}
}

\date{}

\maketitle

\begin{abstract}
	The large deviation principle in the small noise limit is derived for solutions of possibly degenerate It{\^o} stochastic differential equations with predictable coefficients, which may depend also on the large deviation parameter. The result is established under mild assumptions using the Dupuis-Ellis weak convergence approach. Applications to certain systems with memory and to positive diffusions with square-root-like dispersion coefficient are included.
\end{abstract}

\medskip {\small \textbf{2000 AMS subject classifications:} 60F10, 60H10, 60J60, 34K50.}

\medskip {\small \textbf{Key words and phrases:} large deviations, It{\^o} processes, stochastic differential equation, Freidlin-Wentzell estimates, time delay, \mbox{CIR} process, weak convergence.}

\section{Introduction}
\label{SectIntro}

Freidlin-Wentzell estimates for It{\^o} stochastic differential equations of diffusion type are concerned with large (order one) deviations of solutions to  
\begin{equation} \label{EqIntroSDE}
	dX^{\epsilon}_t = b(X^{\epsilon}_t)\,dt + \sqrt{\epsilon}\,\s(X^\epsilon_t)\,dW_t
\end{equation}
from their small noise limit as the noise parameter $\epsilon > 0$ tends to zero. The small noise limit here is the deterministic dynamical system given by the ordinary differential equation
\begin{equation} \label{EqIntroODE}
	d\phi_t = b(\phi_t)dt.
\end{equation}
In \eqref{EqIntroSDE} and \eqref{EqIntroODE} above, the solutions are $\R^d$-valued, $b$ is a vector field $\R^d \rightarrow \R^d$, $\s$ a matrix-valued function $\R^d \rightarrow \R^{d\times m}$, and $W$ an $m$-dimensional standard Brownian motion, which serves as a model for noise. Solutions of \eqref{EqIntroSDE} and \eqref{EqIntroODE} are usually considered over a finite time interval, say $[0,T]$, with the same deterministic initial condition $X^{\epsilon}_0 = x = \phi_0$.

Large deviations are quantified in terms of the large deviation principle; see, for instance, Section~1.2 in \citet{dembozeitouni}. Let us recall the definition in the context of Polish spaces (i.e., topological spaces that are separable and compatible with a complete metric). Let $\mathcal{X}$ be a Polish space. A \emph{rate function} on $\mathcal{X}$ is a lower semicontinuous function $\mathcal{X}\rightarrow [0,\infty]$. A rate function is said to be \emph{good} if its sublevel sets are compact. The \emph{large deviation principle} is said to hold for a family $(\xi^{\epsilon})_{\epsilon>0}$ of $\mathcal{X}$-valued random variables with rate function $I$ if for all $\Gamma\in \mathcal{B}(\mathcal{X})$,
\begin{align*}
	-\inf_{x\in \Gamma^{\circ}} I(x) &\leq \liminf_{\epsilon\to 0+} \epsilon\log\Prb\left( \xi^{\epsilon}\in \Gamma \right)	\\
	&\leq \limsup_{\epsilon\to 0+} \epsilon\log\Prb\left( \xi^{\epsilon}\in \Gamma \right) \leq -\inf_{x\in \cl(\Gamma)} I(x),
\end{align*}
where $\cl(\Gamma)$ denotes the closure and $\Gamma^{\circ}$ the interior of $\Gamma$. We will also need the following alternative characterization. The \emph{Laplace principle} is said to hold for a family $(\xi^{\epsilon})_{\epsilon>0}$ of $\mathcal{X}$-valued random variables with rate function $I$ if for all $F\in \mathbf{C}_b(\mathcal{X})$ (i.e., $F$ bounded and continuous),
\[
	\lim_{\epsilon\to 0+} -\epsilon\log\Mean\left[\exp\left(-\frac{1}{\epsilon}F(\xi^{\epsilon}) \right) \right] = \inf_{x\in\mathcal{X}}\left\{I(x) + F(x)\right\}.
\]
If the rate function $I$ is good, then the Laplace principle holds with rate function $I$ if and only if the large deviation principle holds with rate function $I$; see, for instance, Section~1.2 in \citet{dupuisellis}.

Various sets of assumptions on the coefficients $b$, $\s$ in \eqref{EqIntroSDE} are known to imply that the large deviation principle holds for the family $(X^\epsilon)_{\epsilon > 0}$ of $\mathbf{C}([0,T],\R^d)$-valued random variables. In the non-degenerate case, that is, if $d = m$ and the matrix-valued function $\s\trans{\s}$ is uniformly positive definite, the large deviation principle holds if, for instance, $b$ and $\s$ are bounded and uniformly continuous; the rate function then takes the form
\[
	I_x(\phi) =\frac{1}{2}\int_0^T \trans{(\dot{\phi_s}-b(s,\phi_s))} (\s\trans{\s})^{-1}(s,\phi_s)(\dot{\phi_s}-b(s,\phi_s))\,ds
\]
whenever $\phi\in \mathbf{C}([0,T],\R^d)$ is absolutely continuous with $\phi_0=x$, and $I_x(\phi) = \infty$ otherwise; see Theorem~5.3.1 in \citet[pp.\,154-155]{freidlinwentzell}. In the general case of possibly degenerate diffusion matrix, the rate function $I_x$ can be expressed as
\begin{equation} \label{EqIntroRateFnct}
	I_x(\phi) \doteq \inf_{\{f\in L^{2}: \phi = x + \int_{0}^{\bcdot}(b(\phi_s) + \s(\phi_s)f_s)ds\}} \frac{1}{2} \int_{0}^{T} |f_t|^2 dt,
\end{equation}
where $\inf\emptyset = \infty$ by convention; see, for instance, Section~5.6 of \citet{dembozeitouni}, where $b$, $\sigma$ are assumed to be globally Lipschitz continuous and $\sigma$ is bounded. In \citet{baldicaramellino}, building on an older work by \citet{baldichaleyatmaurel88}, which in turn improves on results obtained by \citet{priouret82}, the large deviation principle with rate function given by \eqref{EqIntroRateFnct} is established for locally Lipschitz continuous coefficients $b$, $\sigma$ satisfying a sublinear growth condition. The result in \citet{baldicaramellino} is actually more general, see the discussion in Section~\ref{SectAppl} below. The three works just mentioned all use a method of proof due to \citet{azencott80}. The idea is to show that when $\sqrt{\epsilon} W$ is close to a path $\psi = \int_{0}^{.}f_{t}dt$, where $f\in L^{2}([0,T],\mathbb{R}^{m})$, then the probabilities that $X^{\epsilon}$ deviates from the solution $\phi$ to the the integral equation
\begin{equation} \label{EqIntroControlIntEq}
	\phi_{t} = x + \int_{0}^{t} b(\phi_{s})ds + \int_{0}^{t}\sigma(\phi_{s})f_{s}ds,\quad t\in [0,T],
\end{equation}
are exponentially small in $\epsilon$. This can be interpreted as a quasi-continuity property of the It{\^o} solution map associated with $b$, $\sigma$. To verify the quasi-continuity property, assuming that Equation~\eqref{EqIntroControlIntEq} is well-posed given any ``control'' $f\in L^{2}([0,T],\mathbb{R}^{m})$, one first establishes, using a discretization argument applied to Equation~\eqref{EqIntroSDE} and exponential martingale inequalities, the quasi-continuity property for the zero control and time dependent drift coefficients; the estimate is then transferred to controls in $L^2$ and the original coefficients using a change of measure based on Girsanov's theorem. 

In this paper, we study small noise large deviations for possibly degenerate It{\^o} stochastic differential equations with coefficients $b$, $\s$ that may depend on time and the past of the solution trajectory (predictable coefficients) as well as on the large deviation parameter $\epsilon$; cf.\ Equation~\eqref{SDE2} below. This general setting has also been studied in \citet{puhalskii}. The proof of the large deviation principle there is based on Puhalskii's weak convergence approach to large deviations, which builds on idempotent probability theory and convergence in terms of maxingale problems, the idempotent analogues of martingale problems; see \citet{puhalskiiBook}. The assumptions needed in \citet{puhalskii} to establish the large deviation principle are very mild, the main assumption being that Luzin weak uniqueness holds for the idempotent It{\^o} stochastic differential equation associated with the predictable coefficients $b$, $\s$; sufficient conditions in terms of regularity and growth properties of $b$, $\s$ are provided.

The approach we follow here in establishing the large deviation principle, actually through the Laplace principle, is the weak convergence approach introduced by \citet{dupuisellis} and adapted to the study of stochastic systems driven by finite-dimensional Brownian motion in \citet{bouedupuis}. The approach, or more precisely the variational formula for Laplace functionals which is its starting point, has been extended to stochastic systems driven by infinite-dimensional Brownian motion and/or a Poisson random measure in \citet{budhirajadupuis} and \citet{budhirajaInfinite, budhirajaJumps}. Using that approach in the present situation, it is straightforward to prove the large deviation principle for solutions of Equation~\eqref{EqIntroSDE} when the coefficients are globally Lipschitz continuous; see Section~4.2 in \citet{bouedupuis} or, for the case of finite-dimensional jump diffusions, Section~4.1 in \citet{budhirajaJumps}. Here, we obtain the large deviation principle for predictable coefficients under much weaker hypotheses, which can be summarized as follows: continuity of the coefficients in the state variable; strong existence and uniqueness for the (stochastic) prelimit equations; uniqueness for a controlled version of the (deterministic) limit equation; stability of the prelimit solutions under $L^2$-bounded perturbations in terms of tightness of laws. An advantage of the weak convergence method is that the large deviation principle can be derived in a unified way under mild conditions with no need for resorting to discretization arguments or exponential probability estimates. Instead, one uses ordinary tightness and weak convergence for a family of controlled versions of the original processes.  

The rest of this paper is organized as follows. Section~\ref{SectLDP} is dedicated to the statement and proof of the large deviation principle under general hypotheses. In Section~\ref{SectLipschitz}, we verify the hypotheses for coefficients that are locally Lipschitz continuous with sublinear growth at infinity but may depend on the past as well as the large deviation parameter $\epsilon$. This result yields, as an application, the large deviation principle obtained in \citet{mohammedzhang} for a class of systems with memory or delay; see Section~\ref{SectApplDelay}. The approach used here actually allows to easily handle more general delay models than the point delay studied in \citet{mohammedzhang}; cf.\ Remark~\ref{RemDelayModels} below. In Section~\ref{SectApplHoelder}, we derive the large deviation principle for a class of positive It\^o diffusions with dispersion coefficient of square-root type such as, for example, the Cox-Ingersoll-Ross (\mbox{CIR}) process, which serves as a model for interest rates in mathematical finance. The result is essentially the same as the large deviation principle for positive diffusions obtained in \citet{baldicaramellino}; it might be compared to Theorem~1.3 in \citet{donatimartinetal04}, which also covers degenerate cases (zero initial condition or drift vanishing in zero). The appendix contains the variational formula for Laplace functionals of Brownian motion obtained in \citet{bouedupuis} as well as two related technical results.


\section{General large deviation principle}
\label{SectLDP}

Let $d,m\in \N$, and let $T>0$. For $n\in\N$, set $\mathcal{W}^n\doteq\mathbf{C}([0,T],\R^n)$ and endow $\mathcal{W}^n$ with the standard topology of uniform convergence. For $\epsilon>0$, let $b_\epsilon$ and $b$ be functions mapping $[0,T]\times\mathcal{W}^d$ to $\R^d$, and $\s_\epsilon$ and $\s$ functions mapping $[0,T]\times\mathcal{W}^d$ to $\R^{d\times m}$. Let $(\mathcal{W}^m,\mathcal{B},\theta)$ be the canonical probability space with Wiener measure $\theta$, and let $W$ be the coordinate process. Thus $W$ is an $m$-dimensional standard Brownian motion with respect to $\theta$. Let $(\mathcal{G}_t)$ be the $\theta$-augmented filtration generated by $W$, and let $\mathcal{M}^2[0,T]$ denote the space of $\R^{m}$-valued square-integrable $(\mathcal{G}_t)$-predictable processes.

Fix $x\in \R^d$. For $\epsilon>0$, we consider the It{\^o} stochastic differential equation
\begin{equation}\label{SDE2}
      dX^\epsilon_t = b_\epsilon(t,X^\epsilon)\,dt + \sqrt{\epsilon}\,\s_\epsilon(t,X^\epsilon)\,dW_t,
\end{equation}
and, with $v\in \mathcal{M}^2[0,T]$, its controlled counterpart
\begin{equation}\label{ControlSDE2}
      dX^{\epsilon,v}_t=b_\epsilon(t,X^{\epsilon,v})\,dt + \s_\epsilon(t,X^{\epsilon,v})v_t\,dt+\sqrt{\epsilon}\,\s_\epsilon(t,X^{\epsilon,v})\,dW_t,
\end{equation}
both over the time interval $[0,T]$ and with initial condition $X^{\epsilon,v}_0 = X^\epsilon_0 = x$.
Observe that if $\epsilon = 0$, then Equation~\eqref{SDE2} becomes a deterministic functional equation, namely
\begin{equation}
	\phi_t=x+\int_0^t b(s,\phi)\,ds.
\end{equation}
Similarly, if $\epsilon = 0$ and we pick $v = f\in L^2([0,T];\R^m)$, then Equation~\eqref{ControlSDE2} reduces to
\begin{equation}\label{IntegralEquation}
	\phi_t= x + \int_0^t b(s,\phi)\,ds + \int_0^t \s(s,\phi)f_s\,ds.
\end{equation}

Let us introduce the following assumptions:
\begin{description}
	\item[H1] The coefficients $b$ and $\s$ are predictable. Moreover, $b(t,\bcdot)$, $\s(t,\bcdot)$ are uniformly continuous on compact subsets of $\mathcal{W}^d$, uniformly in $t\in[0,T]$, and $t\mapsto \s(t,\phi)$ is in $L^2([0,T];\R^d)$ for any $\phi\in\mathcal{W}^d$.

	\item[H2] The coefficients $b_\epsilon$, $\s_\epsilon$ are predictable maps such that $b_\epsilon\to b$ and $\s_\epsilon\to \s$ as $\epsilon \to 0$ uniformly on $[0,T]\times \mathcal{W}^d$.

	\item[H3] For all $\epsilon > 0$ sufficiently small, pathwise uniqueness and existence in the strong sense hold for Equation~\eqref{SDE2}.
  
	\item[H4] For any $f\in L^2([0,T];\R^m)$, Equation~\eqref{IntegralEquation} has a unique solution so that the map
  \[
  \Gamma_x:L^2([0,T];\R^m)\longrightarrow \mathcal{W}^d
  \]
  which takes $f\in L^2[0,T]$ to the solution of Equation~\eqref{IntegralEquation} is well defined.
  
	\item[H5] For all $N\in\N$, the map $\Gamma_x$ is continuous when restricted to
  \[
  S_N\doteq \left\{f\in L^2([0,T],\R^m): \int_0^T |f_s|^2 ds\leq N\right\}
  \]
  endowed with the weak topology of $L^2[0,T]$.

	\item[H6] If $\{\epsilon_n\} \subset (0,1]$ is such that $\epsilon_n\to 0$ as $n\to\infty$ and $\{v_n\}_{n\in\N}\subset\mathcal{M}^2[0,T]$ is such that, for some constant $N>0$,
  \[
  	\sup_{n\in\N}\int_0^T |v^n_s(\omega)|^2\,ds \leq N \quad\text{for $\theta$-almost all }\omega\in\mathcal{W}^m,
  \]
  then $\{X^{\epsilon_n,v_n}\}_{n\in\N}$ is tight as a family of $\mathcal{W}^d$-valued random variables and
  \[
  \sup_{n\in\N} \int_0^T \E\left[|\s(s,X^{\epsilon_n,v_n})|^2\right]ds < \infty.
  \]
\end{description}

\begin{remark} \label{RemH2}
We shall see in Section~\ref{SectLipschitz} that assumption H2 can be weakened. Specifically, we shall require uniform convergence of $b_\epsilon$, $\s_\epsilon$ to $b$ and $\s$, respectively, only on bounded subsets of $\mathcal{W}^d$.
\end{remark}

\begin{remark} \label{RemH4}
As will be clear from the proof of Theorem~\ref{ThLDP}, existence of solutions to Equation~\eqref{IntegralEquation} is a consequence of hypotheses H1--H3 and H6. Thus hypothesis H4 reduces to the requirement of uniqueness of solutions for the deterministic integral equation \eqref{IntegralEquation}.
\end{remark}

\begin{remark} The spaces $S_N\doteq \{f\in L^2([0,T],\R^m): \int_0^T |f_s|^2 ds\leq N\}$, $N\in \N$, introduced in Hypothesis H5 are compact Polish spaces when endowed with the weak topology of $L^2[0,T]$. Continuity of the restriction of $\Gamma_x$ to $S_{N}$ as required by H5 only is needed to guarantee that the rate function has compact sublevel sets, and accordingly is good.
\end{remark}

\begin{theorem} \label{ThLDP}
Grant H1--H6. Then the family $\{X^\epsilon\}_{\epsilon>0}$ of solutions of the stochastic differential equation \eqref{SDE2} with initial condition $X^\epsilon_0=x$ satisfies the Laplace principle with good rate function $I_x: \mathcal{W}^{d} \rightarrow [0,\infty]$ given by
\[
	I_x(\phi)=\inf_{\{f\in L^2([0,T];\R^m): \Gamma_x(f)=\phi\}}\frac{1}{2}\int_0^T |f_t|^2\,dt
\]
whenever $\{f\in L^2([0,T];\R^m): \Gamma_x(f)=\phi\}\neq \emptyset$, and $I_x(\phi)=\infty$ otherwise.
\end{theorem}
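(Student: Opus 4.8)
The plan is to establish the Laplace principle directly, using the Boué–Dupuis variational representation for functionals of Brownian motion (recalled in the appendix). Fix $F\in\mathbf{C}_b(\mathcal{W}^d)$. Since $X^\epsilon$ is a measurable functional of $W$ by H3, write $X^\epsilon=\mathcal{G}^\epsilon(W)$, and the representation gives
\[
	-\epsilon\log\Mean\!\left[\exp\!\left(-\tfrac1\epsilon F(X^\epsilon)\right)\right]
	=\inf_{v\in\mathcal{M}^2[0,T]}\Mean\!\left[\tfrac12\int_0^T|v_t|^2\,dt+F(X^{\epsilon,v})\right],
\]
where $X^{\epsilon,v}$ solves the controlled equation \eqref{ControlSDE2}; strong existence/uniqueness for \eqref{ControlSDE2} follows from H3 by a Girsanov change of measure, so $X^{\epsilon,v}=\mathcal{G}^\epsilon(W+\tfrac1{\sqrt\epsilon}\int_0^\bcdot v_s\,ds)$ is well defined. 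It then suffices to prove the matching Laplace lower and upper bounds for the right-hand side as $\epsilon\to0$, with target $\inf_\phi\{I_x(\phi)+F(\phi)\}$. A standard reduction (as in \citet{bouedupuis,budhirajadupuis}) lets us restrict the infimum to controls $v$ with $\int_0^T|v_t|^2\,dt\le N$ a.s.\ for some fixed large $N$, at the cost of an error vanishing as $N\to\infty$, since $F$ is bounded.

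For the \textbf{lower bound} (i.e.\ $\liminf\ge\inf\{I_x+F\}$): take near-optimal controls $v_\epsilon$, uniformly $L^2$-bounded by the reduction above. By H6 the laws of $(X^{\epsilon,v_\epsilon},v_\epsilon)$ — viewing $v_\epsilon$ as a random element of $S_N$ with the weak $L^2$ topology, which is compact and metrizable — are tight; pass to a subsequence converging in distribution to some $(\psi,v)$. By the Skorokhod representation theorem realize this convergence almost surely on a common space. The key identification step is to show $\psi=\Gamma_x(v)$ a.s., i.e.\ that the limit solves the controlled limit equation \eqref{IntegralEquation}; this is where H1 (continuity of $b,\s$ in the path variable, uniformly in $t$), H2 (uniform convergence $b_\epsilon\to b$, $\s_\epsilon\to\s$), and the vanishing of the $\sqrt\epsilon\,dW$ term are used to pass to the limit in each term of \eqref{ControlSDE2}, the drift term $\int_0^\bcdot\s_\epsilon(s,X^{\epsilon,v_\epsilon})v_\epsilon^s\,ds$ being handled by weak $L^2$ convergence of $v_\epsilon$ together with strong convergence of $\s_\epsilon(\cdot,X^{\epsilon,v_\epsilon})\to\s(\cdot,\psi)$ in $L^2([0,T])$, the latter justified by H1, H6 and a uniform integrability argument. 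Then by Fatou and weak lower semicontinuity of $v\mapsto\int_0^T|v_t|^2\,dt$,
\[
	\liminf_{\epsilon\to0}\Mean\!\left[\tfrac12\!\int_0^T\!|v_\epsilon^t|^2dt+F(X^{\epsilon,v_\epsilon})\right]
	\ge\Mean\!\left[\tfrac12\!\int_0^T\!|v_t|^2dt+F(\Gamma_x(v))\right]\ge\inf_\phi\{I_x(\phi)+F(\phi)\},
\]
the last inequality by definition of $I_x$.

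For the \textbf{upper bound} (i.e.\ $\limsup\le\inf\{I_x+F\}$): fix $\delta>0$ and pick $\phi^\delta$ with $I_x(\phi^\delta)+F(\phi^\delta)\le\inf_\phi\{I_x+F\}+\delta$; then choose $f^\delta\in L^2$ with $\Gamma_x(f^\delta)=\phi^\delta$ and $\tfrac12\int_0^T|f^\delta_t|^2dt\le I_x(\phi^\delta)+\delta$. Use the \emph{deterministic} control $v\equiv f^\delta$ in the representation; it remains to show $X^{\epsilon,f^\delta}\to\Gamma_x(f^\delta)=\phi^\delta$ in probability (in $\mathcal{W}^d$) as $\epsilon\to0$. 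This again follows from H6 (tightness), the limit-identification argument above, and H4 (uniqueness of the limit equation), so that the full family converges to the single point $\phi^\delta$; boundedness and continuity of $F$ then give
\[
	\limsup_{\epsilon\to0}\Mean\!\left[\tfrac12\!\int_0^T\!|f^\delta_t|^2dt+F(X^{\epsilon,f^\delta})\right]\le I_x(\phi^\delta)+\delta+F(\phi^\delta)\le\inf_\phi\{I_x(\phi)+F(\phi)\}+2\delta,
\]
and letting $\delta\to0$ finishes the Laplace principle. Finally, \emph{goodness} of $I_x$: its sublevel set $\{I_x\le N/2\}$ is contained in $\Gamma_x(S_N)$, which is compact because $S_N$ is weakly compact and $\Gamma_x|_{S_N}$ is continuous by H5; lower semicontinuity of $I_x$ follows from H5 and weak compactness of sublevel sets of the $L^2$-norm by a routine diagonal argument. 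The main obstacle is the limit-identification step $\psi=\Gamma_x(v)$: getting the strong $L^2$-convergence of $\s_\epsilon(\cdot,X^{\epsilon,v_\epsilon})$ needed to pair against the only weakly convergent controls $v_\epsilon$, which is exactly what the second half of H6 and the uniform continuity in H1 are designed to supply; the proof of existence for \eqref{IntegralEquation} (Remark~\ref{RemH4}) is obtained as a by-product of this same compactness-and-identification argument applied with $\epsilon_n\to0$ and $v_n\equiv f$.
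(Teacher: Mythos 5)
Your proposal is correct and follows essentially the same route as the paper: the Bou\'e--Dupuis representation, truncation to controls with deterministically bounded $L^2$-norm, tightness of $(X^{\epsilon,v_\epsilon},v_\epsilon)$ via H6, identification of the weak limit as a solution of the controlled limit equation \eqref{IntegralEquation} using H1--H2, Fatou plus weak lower semicontinuity for the lower bound, a deterministic near-optimal control together with H4-uniqueness for the upper bound, and H5 for goodness. The only cosmetic difference is that you invoke Skorokhod representation for the identification step, whereas the paper works directly with the bounded continuous functional $\Psi_t$ and the continuous mapping theorem; both are standard and equivalent here.
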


\begin{proof}[Proof of the lower bound.] The first step in proving Theorem~\ref{ThLDP} is the Laplace principle lower bound. We have to show that for any bounded and continuous function $F\!: \mathcal{W}^d \rightarrow \R$,
\begin{equation} \label{eq:lowerbound}
	\liminf_{\epsilon\to0+}-\epsilon \log\E\biggl[e^{-\frac{F(X^\epsilon)}{\epsilon}}\biggl]\geq\inf_{\phi\in\mathcal{W}^d}\{F(\phi)+I_x(\phi)\}.
\end{equation}
It suffices to prove that any sequence $\{\epsilon_n\}_{n\in\N} \subset (0,1]$ such that $\epsilon_n\to0$ as $n\to\infty$ has a subsequence for which the above limit relation holds.

Let $\{\epsilon_n\}_{n\in\N} \subset (0,1]$ be such that $\epsilon_n\to0$. By assumption H3, for any $n\in\N$, $X^n \doteq X^{\epsilon_n}$ is a strong solution of Equation \eqref{SDE2}. Hence there exists a measurable map $h^n\!:\mathcal{W}^m\to\mathcal{W}^d$ such that $X^n=h^n(W)$ $\theta$-almost surely. Representation formula \eqref{RepresentationFormulaC} in the appendix applies and yields
\begin{multline} \label{eq:representation}
	-\epsilon_n\log\E\left[e^{-\frac{F(X^n)}{\epsilon_n}}\right] = -\epsilon_n\log\E\left[e^{-\frac{F\circ h^n(W)}{\epsilon_n}}\right] \\
	=\epsilon_n \inf_{v\in \mathcal{M}^2[0,T]}\E\left[\frac{1}{2}\int_0^T|v_s|^2\,ds+\frac{1}{\epsilon_n}F\circ h^n\left(W+\int_{0}^{\bcdot} v_s\,ds\right)\right] \\
	=\inf_{v\in \mathcal{M}^2[0,T]}\E\left[\frac{1}{2}\int_0^T|v_s|^2\,ds+F\circ h^n\left(W+\frac{1}{\sqrt{\epsilon_n}}\int_{0}^{\bcdot} v_s\,ds\right)\right].
\end{multline}

Fix $\delta>0$. We claim that there exists a constant $N > 0$ such that for every $n\in\N$ there exists $v^n\in \mathcal{M}^2[0,T]$ such that $\int_0^T|v_s^n|^2 ds\leq N$ and
\begin{multline}\label{eq:lowerrepre}
	-\epsilon_n\log\E\left[e^{-\frac{F(X^n)}{\epsilon_n}}\right]\\
	\geq \E\left[\frac{1}{2}\int_0^T|v^n_s|^2\,ds + F\circ h^n\left(W+\frac{1}{\sqrt{\epsilon_n}}\int_{0}^{\bcdot} v^n_s\,ds\right)\right] - \delta.
\end{multline}
Indeed, by the definition of infimum, for any $n\in\N$ there exists $u^n\in \mathcal{M}^2[0,T]$ such that
\begin{multline*}
	-\epsilon_n\log\E\left[e^{-\frac{F(X^n)}{\epsilon_n}}\right] \\
	\geq\E\left[\frac{1}{2}\int_0^T|u^n_s|^2\,ds + F\circ h^n\left(W+\frac{1}{\sqrt{\epsilon_n}}\int_{0}^{\bcdot} u^n_s\,ds\right) \right]-\frac{\delta}{2}.
\end{multline*}
Setting $M\doteq \|F\|_\infty$, it follows that
\begin{equation} \label{eq:tightness}
	\sup_{n\in\N} \E\left[\frac{1}{2}\int_0^T|u^n_s|^2\,ds\right] \leq 2M + \frac{\delta}{2} < \infty.
\end{equation}
For $N\in \mathbb{N}$ define the stopping time
\[
	\tau^n_N\doteq \inf\left\{t\in[0,T]: \int_0^t|u^n_s|^2\,ds\geq N\right\}\wedge T.
\]
The processes $u^{n,N}_s \doteq u^n_s\cdot \boldsymbol{1}_{[0,\tau^n_N]}(s)$ belong to $\mathcal{M}^2[0,T]$ with $\int_0^T|u^{n,N}_s|^2\,ds\leq N$. By Chebychev's inequality and \eqref{eq:tightness},
\[
	\theta\left(u^n\neq u^{n,N}\right)\leq \theta\left(\int_0^T|u^n_s|^2\,ds\geq N\right)\leq \frac{4M+\delta}{N}.
\]
This observation implies that
\begin{multline}\label{eq:approx}
	-\epsilon_n\log\E\left[e^{-\frac{F(X^n)}{\epsilon_n}}\right] \\
	\geq \E\left[\frac{1}{2}\int_0^T|u^{n,N}_s|^2\,ds + F\circ h^n\left(W+\frac{1}{\sqrt{\epsilon_n}}\int_0^{\bcdot}u^{n,N}_s\,ds\right)\right] \\
	- \frac{2M(4M+\delta)}{N}-\frac{\delta}{2}.
\end{multline}
In view of \eqref{eq:approx}, to verify the claim, we take $N$ big enough so that
\[
	\frac{2M(4M+\delta)}{N}<\frac{\delta}{2}
\]
and set, for $n\in \mathbb{N}$, $v^n\doteq u^{n,N}$.

Choose $N$ and $\{v^n\}\subset\mathcal{M}^2[0,T]$ according to the claim, $\delta > 0$ being fixed. Thanks to hypothesis H3 and Lemma~\ref{LemmaControlSDERep} in the appendix, the controlled stochastic equation
\[
	dX^{n,v^n}_t=b_{\epsilon_n}(t,X^{n,v^n})\,dt+\s_{\epsilon_n}(t,X^{n,v^n})v^n_t\,dt+\sqrt{\epsilon_n}\s_{\epsilon_n}(t,X^{n,v^n})\,dW_t
\]
possesses a unique strong solution with $X^{n,v^n}_0 = x$, and
\begin{equation}\label{EqControl}
	h^n\left(W+\frac{1}{\sqrt{\epsilon_n}}\int_0^{\bcdot}v^{n}_s\,ds\right) = X^{n,v^n}\qquad \theta\text{-a.e}.
\end{equation}
It follows that, for any $n\in\N$, we can rewrite \eqref{eq:lowerrepre} to obtain
\[
-\epsilon_n \log\E\left[e^{-\frac{F(X^n)}{\epsilon_n}}\right]\geq \E\left[\frac{1}{2}\int_0^T|v^{n}_s|^2\,ds + F(X^{n,v^n})\right]-\delta,
\]
where $X^{n,v^n}$ is the unique strong solution of Equation~\eqref{ControlSDE2} with $\epsilon = \epsilon_n$ and control $v = v_{n}$.

Next we check that $\{(X^{n,v^n},v^n)\}_{n\in\mathbb{N}}$ is tight as a family of random variables with values in $\mathcal{W}^d\times S_N$. Since both $S_N$ and $\mathcal{W}^d$ are Polish spaces, it suffices to show that $\{X^{n,v^n}\}_{n\in\N}$ is tight as a family of $\mathcal{W}^d$-valued random variables and $\{v^n\}_{n\in\N}$ is tight as a family of $S_N$-valued random variables. But tightness of $\{X^{n,v^n}\}_{n\in\N}$ follows by assumption H6, while tightness of $\{v^n\}$ is automatic since $S_{N}$ is compact. Therefore, possibly taking a subsequence, we have that $(X^{n,v^n},v^n)$ converges in distribution to a $\mathcal{W}^d\times S_N$-valued random variable $(X,v)$ defined on some probability space $(\Omega,\F,\PR)$. Let us denote by $\E_{\PR}$ expectation with respect to the measure $\PR$. We are going to show that $X$ satisfies
\begin{equation}\label{limitEq}
	X_t = x+\int_0^t b(s,X)\,ds+\int_0^t\s(s,X)v_s\,ds\qquad\PR\text{-a.s.}
\end{equation}
To this end, for $t\in [0,T]$, consider the map $\Psi_t\!: \mathcal{W}^d\times S_N\to\R$ defined by
\[
	\Psi_t(\phi,f)\doteq \left|\phi(t)-x-\int_0^t b(s,\phi(s))\,ds-\int_0^t\s(s,\phi(s))f_s\,ds\right|\wedge 1.
\]
Clearly, $\Psi_t$ is bounded. Moreover, $\Psi_t$ is continuous. Indeed, let $\phi^n\to\phi$ in $\mathcal{W}^d$ and $f^{n}\to f$ in $S_N$ with respect to the weak topology of $L^2$. The set $\mathcal{C}\doteq \{\phi^n:n\in\N\}\cup\{\phi\}$ is a compact subset of $\mathcal{W}^d$. Therefore, by assumption H1, there exist moduli of continuity $\rho_b$ and $\rho_\s$ mapping $[0,\infty[$ into $[0,\infty[$ such that $|b(s,\phi)-b(s,\psi)|\leq \rho_b(\|\phi-\psi\|_\infty)$ and $|\s(s,\phi)-\s(s,\psi)|\leq \rho_\s(\|\phi-\psi\|_\infty)$ for all $s\in[0,T]$ and all $\phi,\psi\in \mathcal{C}$. Using H\"older's inequality and the fact that $\|f^n\|_{L^2}\leq \sqrt{N}$, we find
\begin{align*}
	&|\Psi_t(\phi^n,f^n)-\Psi_t(\phi,f)| \\
	&\leq |\phi^n_t-\phi_t| + \int_0^t |b(s,\phi^n)-b(s,\phi)|\,ds \\
	&+\quad \int_0^t|\s(s,\phi)-\s(s,\phi^n)|\cdot|f^n_s|\,ds + \left|\int_0^t \s(s,\phi)\left(f - f^n_s\right)ds \right| \\
	&\leq\|\phi^n-\phi\|_\infty + T\cdot \rho_b(\|\phi^n-\phi\|_\infty) + \sqrt{N\cdot T}\cdot\rho_\s(\|\phi^n-\phi\|_\infty) \\
	&\qquad +\left|\int_0^t\s(s,\phi)\left(f_s- f^n_s\right)ds\right|.
\end{align*}
The terms involving $\|\phi-\phi^{n}\|_{\infty}$ in the above display go to zero as $n\to \infty$. Thanks to hypothesis H1, the function $\s(\cdot,\phi)$ is in $L^2[0,T]$; since $f^n$ converges weakly to $f$, the rightmost term of the previous display goes to zero as well. This shows that $\Psi_t$ is continuous. Since $(X^{n,v^n},v^n)$ converges in distribution to $(X,v)$ and $\Psi_t$ is bounded and continuous, the continuous mapping theorem for weak convergence implies that
\begin{equation}\label{eq:limitForX}
    \lim_{n\to\infty}\E\left[\Psi_t(X^{n,v^n},v^n)\right] = \E_{\PR}\left[\Psi_t(X,v)\right].
\end{equation}
If we show that the limit in \eqref{eq:limitForX} is actually zero, then, by definition of $\Psi_t$, $X$ will satisfy Equation~\eqref{limitEq} $\PR$-almost surely for all $t\in[0,T]$. Since $X$ has continuous paths, it follows that $X$ satisfies Equation~\eqref{limitEq} for all $t\in[0,T]$, $\PR$-almost surely. Observe that
\begin{multline*}
	\E[\Psi_t(X^{n,v^n},v^n)]\leq \E\left[\int_0^t|b_{\epsilon_n}(s,X^{n,v^n})-b(s,X^{n,v^n})|\,ds\right]\\
	+\E\left[\int_0^t|\s_{\epsilon_n}(s,X^{n,v^n})-\s(s,X^{n,v^n})|\cdot|v_s^{n}|\,ds\right] \\
	+\sqrt{\epsilon_n}\E\left[\Bigl|\int_0^t \s_{\epsilon_n}(s,X^{n,v^n})\,dW_s\Bigr|\right].
\end{multline*}
Using the uniform convergence of $\s_\epsilon$ to $\s$ and of $b_\epsilon$ to $b$ on $[0,T]\times \mathcal{W}^d$ according to H2, we get
\begin{multline*}
	\E[\Psi_t(X^{n,v^n},v^{n})]\leq t\|b_{\epsilon_n}-b\|_\infty \\
	+\|\s_{\epsilon_n}-\s\|_\infty \E\left[\int_0^T|v_s^{n}|\,ds\right] + \sqrt{\epsilon_n}\sqrt{\int_0^t\E\left[ |\s_{\epsilon_n}(s,X^{n,v^n})|^2\right]\,ds},
\end{multline*}
which goes to zero as $n\to\infty$. The last term in the above display tends to zero since
\begin{multline*}
	\sup_{n\in\N} \int_0^t\E\left[ |\s_{\epsilon_n}(s,X^{n,v^n})|^2\right]\,ds
	\leq 2\sup_{n\in\N} \int_0^T\E\left[ |\s(s,X^{n,v^n})|^2\right]\,ds \\
	+ 2\sup_{n\in\N} \int_0^T \E\left[ |\s_{\epsilon_n}(s,X^{n,v^n})-\s(s,X^{n,v^n})|^2\right]\,ds\\
\leq 2T\sup_{n\in\N}\|\s_{\epsilon_n}-\s\|^2_\infty + 2\sup_{n\in\N} \int_0^T\E\left[ |\s(s,X_.^{n,v^n})|^2\right]\,ds,
\end{multline*}
which is finite thanks to hypothesis H6 (and H2). Recalling \eqref{eq:limitForX}, we have shown that
\[
	\lim_{n\to\infty}\E[\Psi_t(X^{n,v^n},v^{n})] = \E_{\PR}[\Psi_t(X,v)]=0.
\]
Thus $X$ satisfies Equation~\eqref{limitEq} for all $t\in[0,T]$, $\PR$-almost surely. If $f\in L^{2}([0,T];\R^m)$, then applying the same argument to the (constant) sequence of deterministic control processes $v^{n} = f$, one finds that Equation~\eqref{IntegralEquation} possesses a solution. The existence part of hypothesis H4 is therefore a consequence of hypotheses H1, H2, H3, and H6. 

The mapping $S_N\ni f\to \int_0^T |f_s|^2 ds\in \R$ is nonnegative and lower semicontinuous (with respect to the weak $L^2$-topology on $S_{N}$). Since the trajectories of $v^n$ are in $S_{N}$ for all $n\in \N$ and $v^n$ converges in distribution to $v$, a version of Fatou's lemma (Theorem A.3.12 in \citet[p.\,307]{dupuisellis}) entails that
\[
	\liminf_{n\to \infty} \E\left[\int_0^T |v^n_s|^2\,ds\right]\geq \E_{\PR}\left[\int_0^T|v_s|^2\,ds\right].
\]
Using this inequality and the continuous mapping theorem (recalling that $F$ is bounded and continuous) we find that
\begin{align*}
	\liminf_{n\to\infty}& -\epsilon_n\log\E\left[e^{-\frac{F(X^n)}{\epsilon_n}}\right]\\
	&\geq \liminf_{n\to\infty} \E\left[\frac{1}{2}\int_0^T|v^n_s|^2\,ds + F(X^{n,v^n})\right] - \delta\\
	&\geq \E_{\PR}\left[\frac{1}{2}\int_0^T|v_s|^2\,ds + F(X^v)\right]-\delta\\
	&\geq \inf_{\{(f,\phi)\in L^2\times\mathcal{W}^d: \phi=\Gamma_x(f)\}}\left\{ \frac{1}{2}\int_0^T |f_s|^2\,ds + F(\phi)\right\} -\delta\\
	&\geq\inf_{\phi\in\mathcal{W}^d}\{I_x(\phi)+F(\phi)\}-\delta.
\end{align*}
The second but last inequality is obtained by evaluating the random variable inside the expectation $\omega$ by $\omega$. Since $\delta$ was arbitrary, the lower bound follows.
\end{proof}

\begin{proof}[Proof of the upper bound.]  We now prove the Laplace principle upper bound,
\begin{equation} \label{eq:upperbound}
	\limsup_{\epsilon\to 0} -\epsilon\log\E\left[e^{-\frac{F(X^\epsilon)}{\epsilon}}\right]\leq\inf_{\phi\in\mathcal{W}^{d}}\{I_x(\phi)+ F(\phi)\}
\end{equation}
for $F\!: \mathcal{W}^d \rightarrow \R$ bounded and continuous. As for the lower bound, it suffices to show that any sequence $\{\epsilon_n\}_{n\in\N} \subset (0,1]$ such that $\epsilon_n\to0$ has a subsequence for which the limit in \eqref{eq:upperbound} holds.

Fix $\delta>0$. If the infimum in \eqref{eq:upperbound} is not finite, the inequality is trivially satisfied; hence we may assume that the infimum is finite. Since $F$ is bounded, there exists $\phi\in\mathcal{W}^d$ such that
\begin{equation}\label{eq:remarkphi}
    I_x(\phi)+F(\phi)\leq \inf_{\psi\in\mathcal{W}^{d}}\{I_x(\psi) + F(\psi)\}+\frac{\delta}{2}<\infty.
\end{equation}
For such $\phi$, choose $\tilde{v}\in L^2([0,T];\R^m)$ such that
\[
	\frac{1}{2}\int_0^T|\tilde{v}_s|^2\,ds\leq I_x(\phi) + \frac{\delta}{2},
\]
and $\phi = \Gamma_x(\tilde{v})$. This choice is possible by the definition of $I_x$ and since $I_x(\phi)<\infty$. Let $\{\epsilon_n\}_{n\in\N} \subset (0,1]$ be such that $\epsilon_n\to0$ as $n\to\infty$. For $n\in\N$, let $X^{n,\tilde{v}}$ be the unique strong solution of Equation~\eqref{ControlSDE2} with $\epsilon = \epsilon_n$ and (deterministic) control $v = \tilde{v}$. Then the family $\{(X^{n,\tilde{v}},\tilde{v})\}_{n\in\N}$ is tight. Therefore, possibly taking a subsequence, $(X^{n,\tilde{v}},\tilde{v})$ converges in distribution to a random variable $(X,\tilde{v})$ defined on some probability space $(\Omega,\F,\PR)$. As in the proof of the lower bound, it follows that, $\PR$-almost surely,
\[
	X_t = x+\int_0^t b(s,X)\,ds+\int_0^t\s(s,X)\tilde{v}_s\,ds\quad\text{for all }t\in[0,T].
\]
The above integral equation, which is deterministic since $\tilde{v}\in L^2([0,T];\R^m)$ is deterministic, coincides with Equation~\eqref{IntegralEquation}. The solution to that equation is unique by assumption H4, hence $X = \Gamma_{x}(\tilde{v}) = \phi$ $\PR$-almost surely. Using representation \eqref{eq:representation}, we obtain
\begin{align*}
	\limsup_{n\to\infty}& -\epsilon_n\log\E\left[e^{-\frac{F(X^{\epsilon_n})}{\epsilon_n}}\right]\\
	&=\limsup_{n\to\infty} \inf_{v\in \mathcal{M}^2[0,T]}\E\left[\frac{1}{2}\int_0^T |v_s|^2\,ds+F\circ h^n\left(W+\frac{1}{\sqrt{\epsilon_n}}\int_0^{\bcdot}v_s\,ds\right)\right]\\
	&\leq \limsup_{n\to\infty} \E\left[\frac{1}{2}\int_0^T |\tilde{v}_s|^2\,ds+F(X^{n,\tilde{v}})\right]\\
	&=\frac{1}{2}\int_0^T |\tilde{v}_s|^2\,ds + \lim_{n\to\infty} \E\left[F(X^{n,\tilde{v}})\right]\\
	&\leq I_x(\phi) + \frac{\delta}{2} + \lim_{n\to\infty} \E\left[F(X^{n,\tilde{v}})\right].
\end{align*}
Since $F$ is bounded and continuous and $X^{n,\tilde{v}}$ converges in distribution to $X=\phi$, we have $\lim_{n\to\infty} \E[F(X^{n,\tilde{v}})]= F(\phi)$. Thanks to \eqref{eq:remarkphi}, we can end the above chain of inequalities by
\[
	I_x(\phi)+\frac{\delta}{2}+F(\phi)\leq \inf_{\psi\in\mathcal{W}^d} \left\{I_x(\psi)+ F(\psi)\right\} + \delta.
\]
Since $\delta > 0$ is arbitrary, the proof of the Laplace principle upper bound is complete.
\end{proof}

\begin{proof}[Goodness of the rate function.] To prove that $I_x$ is actually a good rate function, it remains to check that $I_x$ has compact sublevel sets. This follows from the compactness of $S_N$ for any $N >0$, and by the continuity on these sets of the map $\Gamma_x$, which takes $v$ to the unique solution of Equation~\eqref{IntegralEquation}, according to assumption H5. Indeed $\{\phi\in\mathcal{W}^d: I_x(\phi)\leq N\} = \bigcap_{\epsilon>0} \Gamma_x(S_{N+\epsilon})$ is the intersection of compact sets, hence compact.
\end{proof}


\section{Locally Lipschitz continuous coefficients}
\label{SectLipschitz}

In this section we show that hypotheses H1--H6 hold in the important case of locally Lipschitz continuous coefficients which are predictable and satisfy a sublinear growth condition. With the notation of Section~\ref{SectLDP}, let us introduce the following assumptions:
\begin{description}
	\item[A1] $b$ and $\s$ satisfy a sublinear growth condition. Specifically, there exists $M>0$ such that for all $t\in[0,T]$, all $\phi\in\mathcal{W}^d$,
\[
	|b(t,\phi)| \vee |\s(t,\phi)| \leq M\left(1+\sup_{s\in [0,t]}|\phi_s|\right).
\]
	\item[A2] $b$ and $\s$ are locally Lipschitz continuous. Specifically, for any $R>0$ there exists $L_R>0$ such that for all $t\in[0,T]$, all $\phi, \tilde{\phi}\in\mathcal{W}^d$ with $\sup_{s\in[0,t]}|\phi_s|\vee|\tilde{\phi}_s|\leq R$,
\[
	|b(t,\phi)-b(t,\tilde{\phi})| \vee |\s(t,\phi)-\sigma(t,\tilde{\phi})| \leq L_R\sup_{s\in[0,t]}|\phi_s-\tilde{\phi}_s|.
\]

\item[A3] $b_\epsilon$ and $\s_\epsilon$ enjoy property A1 (with the same constant $M$ as $b$, $\s$) as well as property A2.

\item[A4] $b_\epsilon,\s_\epsilon$ converge as $\epsilon\to0$ to $b$ and $\s$, respectively, uniformly on bounded subsets of $[0,T]\times \mathcal{W}^d$.
\end{description}

\begin{remark} We distinguish between hypotheses A1--A2 and A3 since A3 is not needed to verify H4 and H5. Observe that A4 is not exactly H2, indeed the convergence is not on the whole $\mathcal{W}^d$, but on the bounded subsets of $\mathcal{W}^d$.
\end{remark}

\begin{remark} \label{RemA2}
Assumption A2 implies that if $0\leq t\leq T$ and $\phi,\psi\in\mathcal{W}^d$ are such that $\phi_s=\psi_s$ for all $s\in[0,t]$, then $b(t,\phi)=b(t,\psi)$ and the process $\{b(t,\cdot)\}_{t\geq 0}$ is adapted to the canonical filtration. In particular, if $\{b(t,\cdot)\}_{t\geq 0}$ is c\`adl\`ag, then $b$ is also predictable. The same remark is also true for $\s$.
\end{remark}

\begin{theorem} \label{ThLDPLip}
Grant A1--A4. Then the family $\{X^\epsilon\}_{\epsilon>0}$ of solutions of the stochastic differential equation \eqref{SDE2} with initial condition $X^\epsilon_0 = x$ satisfies the Laplace principle with good rate function $I_x: \mathcal{W}^{d} \rightarrow [0,\infty]$ given by
\[
	I_x(\phi)=\inf_{\{f\in L^2([0,T];\R^m):\phi = \Gamma_x(f)\}}\frac{1}{2}\int_0^T |f_t|^2\,dt
\]
whenever $\{f\in L^2([0,T];\R^m):\phi = \Gamma_x(f)\}\neq \emptyset$, and $I_x(\phi)=\infty$ otherwise.
\end{theorem}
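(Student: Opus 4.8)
The plan is to deduce Theorem~\ref{ThLDPLip} directly from Theorem~\ref{ThLDP} by verifying that assumptions A1--A4, which concern only the \emph{limit} coefficients and their regularity, imply the more abstract hypotheses H1--H6. Most of the work is a sequence of standard stochastic-analysis estimates; the conceptual content is modest, but one has to be careful because the coefficients are path-dependent and $\epsilon$-dependent, so all bounds must be made uniform in $\epsilon$ and in the control $v$.

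First I would take care of the easy identifications. Predictability of $b$, $\s$, $b_\epsilon$, $\s_\epsilon$ and the local-uniform-continuity part of H1 are immediate from A2 (together with Remark~\ref{RemA2}, which gives the adaptedness/causality needed for predictability); that $t\mapsto\s(t,\phi)$ is in $L^2$ follows from the sublinear growth bound A1 since $\phi$ is a bounded continuous path. Hypothesis H2 is weaker than A4 only in that A4 asks for uniform convergence on \emph{bounded} subsets rather than all of $\mathcal{W}^d$; by Remark~\ref{RemH2} this weakening is harmless, and indeed the proof of Theorem~\ref{ThLDP} only ever uses the convergence along the tight (hence, up to null sets, uniformly bounded) families $X^{n,v^n}$, so the argument goes through verbatim with A4 in place of H2 — I would spell out that the relevant sup-norms of $X^{n,v^n}$ are bounded in probability by the a priori estimate below, so the coefficients are evaluated only on a bounded set. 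Hypothesis H3 (strong existence and pathwise uniqueness for \eqref{SDE2}) is the classical Yamada--Watanabe / It\^o theory for SDEs with locally Lipschitz, sublinear-growth coefficients: local Lipschitz continuity (A2, A3) gives pathwise uniqueness and local strong existence, and the sublinear growth bound (A1, A3) prevents explosion via a Gronwall argument on $\E[\sup_{s\le t}|X^\epsilon_s|^2]$, so the solution is global on $[0,T]$; the path-dependence only means one works with $\sup_{s\le t}$ throughout, which the assumptions are tailored to accommodate.

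The heart of the matter is H6, and with it the existence halves of H4 (which by Remark~\ref{RemH4} is then the only thing left in H4 besides uniqueness). Fix a sequence $\epsilon_n\to0$ and controls $v^n\in\mathcal{M}^2[0,T]$ with $\int_0^T|v^n_s|^2\,ds\le N$ almost surely. The key a priori estimate is a uniform second-moment bound: applying It\^o's formula to $|X^{\epsilon_n,v^n}_t|^2$, using A1/A3 for the drift, the controlled drift $\s_{\epsilon_n}v^n$ (here Cauchy--Schwarz turns $\int_0^t|\s_{\epsilon_n}(s,X)||v^n_s|\,ds$ into something controlled by $\sqrt N$ times $(\int_0^t|\s_{\epsilon_n}(s,X)|^2ds)^{1/2}$, which A3 bounds by $\sqrt N\,M\,T^{1/2}(1+\sup|X|)$), and the Burkholder--Davis--Gundy inequality together with $\epsilon_n\le1$ for the martingale part, one gets
\[
	\E\Bigl[\sup_{s\le t}|X^{\epsilon_n,v^n}_s|^2\Bigr] \le C_1 + C_2\int_0^t \E\Bigl[\sup_{r\le s}|X^{\epsilon_n,v^n}_r|^2\Bigr]ds
\]
with $C_1,C_2$ depending only on $M$, $T$, $N$, $x$ — crucially not on $n$ — whence Gronwall gives a uniform bound $\sup_n\E[\sup_{s\le T}|X^{\epsilon_n,v^n}_s|^2]<\infty$. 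This immediately yields the second conclusion of H6, $\sup_n\int_0^T\E[|\s(s,X^{\epsilon_n,v^n})|^2]ds<\infty$, via A1. For tightness in $\mathcal{W}^d$ I would use the Kolmogorov--Chentsov / Aldous-type criterion: from the same ingredients one estimates, for $s<t$, $\E[|X^{\epsilon_n,v^n}_t-X^{\epsilon_n,v^n}_s|^2]$ — the drift contributes $\le C|t-s|^2(1+\sup)$, the controlled term $\le C N^{?}\cdots$ bounded using Cauchy--Schwarz by $C(t-s)\int_s^t\E[|\s_{\epsilon_n}|^2]$, hence $\le C'|t-s|$, and the stochastic term by BDG and It\^o isometry by $C\epsilon_n\int_s^t\E[|\s_{\epsilon_n}|^2]\le C''|t-s|$ — so $\E[|X^{\epsilon_n,v^n}_t-X^{\epsilon_n,v^n}_s|^2]\le C|t-s|$ uniformly in $n$, and together with the uniform bound on the initial value (it is the constant $x$) this gives tightness; since the modulus-of-continuity estimate is only first order in $|t-s|$ one uses the $L^2$-modulus tightness criterion (e.g.\ via a fourth-moment version of the same computation, or the standard criterion of Billingsley using the $p=2$ increments and stopping, which suffices for tightness in $\mathbf{C}$). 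This establishes H6.

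Finally, H4 and H5. By Remark~\ref{RemH4}, existence of solutions of \eqref{IntegralEquation} for every $f\in L^2$ follows from H1--H3 and H6, which are now verified; uniqueness is a direct Gronwall argument — if $\phi,\tilde\phi$ both solve \eqref{IntegralEquation} with the same $f$, then, both being bounded on $[0,T]$ by some common $R$ (a sublinear-growth Gronwall bound depending on $\|f\|_{L^2}$), A2 gives $\sup_{r\le t}|\phi_r-\tilde\phi_r| \le L_R\int_0^t\sup_{r\le s}|\phi_r-\tilde\phi_r|\,ds + \sqrt{t}\,L_R\bigl(\int_0^t\sup_{r\le s}|\phi_r-\tilde\phi_r|^2\,ds\bigr)^{1/2}\|f\|_{L^2}$, and Gronwall forces $\phi=\tilde\phi$; so $\Gamma_x$ is well defined, proving H4. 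For H5, let $f_k\to f$ weakly in $L^2$ with all $f_k\in S_N$, and set $\phi_k=\Gamma_x(f_k)$, $\phi=\Gamma_x(f)$. A sublinear-growth Gronwall bound shows $\sup_k\|\phi_k\|_\infty\le R$ for some $R=R(N,M,T,x)$; then, writing $\phi_k-\phi$ as the sum of the drift difference, the controlled-drift difference $\int_0^t\s(s,\phi_k)f_{k,s}-\s(s,\phi)f_s\,ds$, one splits the latter as $\int_0^t(\s(s,\phi_k)-\s(s,\phi))f_{k,s}\,ds + \int_0^t\s(s,\phi)(f_{k,s}-f_s)\,ds$; the first piece is $\le L_R\sqrt N(\int_0^t\sup_{r\le s}|\phi_{k,r}-\phi_r|^2ds)^{1/2}$ by A2 and Cauchy--Schwarz, while the second piece tends to $0$ uniformly in $t$ because $\s(\cdot,\phi)\mathbf 1_{[0,t]}\in L^2$ and $f_k\rightharpoonup f$ (one checks the convergence is uniform in $t$ by an equicontinuity/compactness argument, or simply notes $\sup_t|\int_0^t g_s(f_{k,s}-f_s)ds|\to0$ for fixed $g\in L^2$). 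Collecting terms and applying Gronwall yields $\|\phi_k-\phi\|_\infty\to0$, i.e.\ continuity of $\Gamma_x$ on $S_N$, which is H5. With H1--H6 verified, Theorem~\ref{ThLDP} applies and gives exactly the statement of Theorem~\ref{ThLDPLip}.

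I expect the main obstacle to be making the a priori estimates in H6 genuinely \emph{uniform} in $\epsilon$ and in the control: one must route the controlled-drift term through Cauchy--Schwarz against the $L^2$-norm bound $\sqrt N$ of the control rather than trying to bound $v^n$ pathwise, handle the $\epsilon_n$-dependent diffusion term using $\epsilon_n\le1$ so the BDG constant does not blow up, and carry $\sup_{s\le t}$ through every step because the coefficients are path-dependent — none of this is deep, but it is where the bookkeeping is delicate and where a careless estimate would fail to be uniform.
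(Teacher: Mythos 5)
Your overall strategy is exactly the paper's: verify H1 and H3 from A1--A3 via standard SDE theory, H4--H5 by Gronwall arguments on the deterministic equation \eqref{IntegralEquation}, H6 by uniform moment bounds plus a Kolmogorov-type increment estimate (the paper works with $p$-th moments for $p>2$ from the outset, via Lemma~\ref{LemmaControlSDEGrowth}, precisely to sidestep the insufficiency of the first-order $L^2$ increment bound that you correctly flag), and then invoke Theorem~\ref{ThLDP}. On all of these points your sketch matches the paper's proof.

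The one genuine gap is your treatment of H2. You assert that the proof of Theorem~\ref{ThLDP} goes through ``verbatim'' with A4 in place of H2 because the families $X^{n,v^n}$ are tight, ``hence, up to null sets, uniformly bounded.'' That parenthetical is false: tightness gives, for each $\delta>0$, a compact set charged with probability at least $1-\delta$ uniformly in $n$, but the paths are not confined to any fixed bounded set almost surely, so uniform convergence of $b_\epsilon,\s_\epsilon$ on bounded sets cannot be applied directly inside the expectation $\E[\Psi_t(X^{n,v^n},v^n)]$. The paper closes this by a localization: truncate the coefficients outside the ball of radius $R$ to obtain coefficients $b^R_\epsilon,\s^R_\epsilon$ that converge uniformly on all of $[0,T]\times\mathcal{W}^d$, run the limit argument of Theorem~\ref{ThLDP} for the truncated system $X^{R,n}$, observe that $\Psi_t(X^{n,v^n},v^n)=\Psi^R_t(X^{R,n},v^n)$ on the event $\{t<\tau^n_R\}$, bound $\PR(t\ge\tau^n_R)\le C/R^2$ uniformly in $n$ via the moment estimate (using that $\Psi_t\le 1$ on the complementary event), and send $R\to\infty$ after $n\to\infty$. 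You have the ingredients in hand (the a priori bound and the boundedness of $\Psi_t$), but the truncation-and-exit-time assembly is a necessary step rather than a cosmetic one, and the ``verbatim'' claim as written does not constitute a proof of it.
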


To prove Theorem~\ref{ThLDPLip} it is enough to show that hypotheses H1--H6 of Theorem~\ref{ThLDP} are entailed by assumptions A1--A4. As mentioned above, we will not be able to prove H2. Instead, we are going to show that in this special setting H2 is not really needed; this discussion is postponed to the end of the section.

\begin{proof}[Hypotheses H1, H3.] H1 is satisfied, in fact $b(t,\cdot)$ and  $\s(t,\cdot)$ are uniformly continuous on bounded subsets of $\mathcal{W}^d$, uniformly in $t\in[0,T]$ because of assumption A2. Moreover $\s(\cdot,\phi)$ belongs to $L^2[0,T]$ for any $\phi\in\mathcal{W}^d$ since
\[
	\sup_{t\in[0,T]}|\s(t,\phi)|^2\leq 2M^2(1+\|\phi\|_\infty^2)
\]
as a consequence of A1. Assumption A3 implies that pathwise uniqueness and existence of strong solutions hold for Equation~\eqref{SDE2}; see, for instance, Theorem 12.1 in \citet[p.\,132]{rogerswilliams}.  
\end{proof}

\begin{proof}[Hypotheses H4, H5.]
In view of Remark~\ref{RemH4}, to verify H4 it suffices to show that, given any $f\in L^{2}([0,T],\R^m)$, there is a unique solution $\phi\in \mathbf{C}([0,T],\R^d)$ of Equation~\eqref{IntegralEquation}. Moreover, for $\phi$ we have the growth estimate
\begin{equation} \label{IntEqBound}
	\sup_{0\leq s\leq t} |\phi_t|^2\leq \left(3|x|^2 + 6M^2t^2 + 6M^2t\|f\|^2\right) e^{6M^2t(t+\|f\|^2)},\quad t\in[0,T].
\end{equation}
To verify that uniqueness holds, let $\phi, \psi\in \mathbf{C}([0,T],\R^d)$ be solutions of Equation~\eqref{IntegralEquation}. Then for $t\in [0,T]$,
\[
	|\phi_t-\psi_t|\leq \int_0^t |b(s,\phi)-b(s,\psi)|\,ds + \int_0^t |\s(s,\phi)-\s(s,\psi)|\cdot |f_s|\,ds.
\]
By taking the square, using H\"older's inequality and the local Lipschitz continuity according to A2, we obtain for $R>0$ big enough (since $\phi$, $\psi$ are bounded),
\[
	|\phi_t-\psi_t|^2\leq 2L^2_R\left(T+\|f\|^2\right)\cdot \int_0^t \sup_{u\in[0,s]} |\phi_u-\psi_u|^2\,ds.
\]
Gronwall's inequality now entails that $\|\phi-\psi\|_\infty = 0$, which yields uniqueness. Similarly, also using the sublinear growth condition A1, one finds that
\begin{align*}
	|\phi_t|^2&\leq 3|x|^2 + 3t\int_{0}^t |b(s,\phi)|^2\,ds + 3\left(\int_{0}^t |\s(s,\phi)|\cdot |f_s|\,ds\right)^2 \\
	&\leq 3|x|^2 + 6M^2\left(t+\|f\|^2\right) \int_{0}^t \left(1+\sup_{0\leq u\leq s}|\phi_u|^2 \right)ds\\
	&\leq 3|x|^2 + 6M^2t^2 + 6M^2t\|f\|^2 + 6M^2\left(t+\|f\|^2\right) \int_{0}^t\sup_{0\leq u\leq s}|\phi_u|^2\,ds.
\end{align*}
An application of Gronwall's inequality now yields the growth estimate \eqref{IntEqBound}.

In order to establish H5, we have to show that, given any $N\in \N$, the map $\Gamma_x$ defined in $H4$ is continuous when restricted to $S_{N}$. Recall that $S_N$ is a compact Polish space. Take $\{f^n\}\subset S_N$ such that $f^n\to f$ weakly, and define $\phi^n\doteq \Gamma_x(f^n)$, $\phi\doteq \Gamma_x(f)$. Then for $t\in [0,T]$,
\begin{multline*}
	\phi^n_t-\phi_t= \int_0^t \left(b(s,\phi^n)-b(s,\phi)\right)ds\\
	+ \int_0^t \left(\s(s,\phi^n)-\s(s,\phi)\right)f^n_s\,ds + \int_0^t \s(s,\phi)\left(f^n_s-f_s\right)ds.
\end{multline*}
Since $\|f^n\|^2\leq N$, estimate \eqref{IntEqBound} yields that $R\doteq \sup_{n\in\N}\|\phi^n\|_\infty \vee \|\phi\|_{\infty}$ is finite. Therefore, using A2,
\begin{multline*}
	\sup_{u\in[0,t]}|\phi^n_u-\phi_u|\leq L_{R} \int_0^t \sup_{u\in[0,s]} |\phi^n_u-\phi_u|\,ds\\
	+ L_{R} \int_0^t \sup_{u\in[0,s]}|\phi^n_u-\phi_u|\cdot|f^n_s|\,ds + \sup_{u\in[0,T]}\left|\int_0^u \s(s,\phi)\left(f^n_s-f_s\right)ds\right|.
\end{multline*}
Set $\Delta_\s^n\doteq \sup_{u\in[0,T]} \left|\int_0^u \s(s,\phi)\left(f^n_s-f_s\right)ds\right|$. By H\"older's inequality and since $\|f^n\|^2\leq N$ for all $n\in\N$, it follows that
\[
	\sup_{u\in[0,t]}|\phi^n_u-\phi_u|^2 \leq 3L_{R}^2(t+N) \int_0^t \sup_{u\in[0,s]} |\phi^n_u-\phi_u|^2\,ds + 3(\Delta_\s^n)^2.
\]
An application of Gronwall's lemma yields
\[
	\|\Gamma_x(f^n)-\Gamma_x(f)\|_{\infty} = \sup_{t\in[0,T]}|\phi^n_t-\phi_t|^2 \leq 3(\Delta_\s^n)^2 e^{3L^2T(T+N)}.
\]
In order to establish continuity of $\Gamma_{x}$ on $S_{N}$, it remains to check that $\Delta_\s^n$ goes to zero as $n\to \infty$. Thanks to assumption A1, the function $\s(\bcdot,\phi)$ is in $L^\infty[0,T]$. It follows that $\s(\bcdot,\phi)f^n$ converges weakly to $\s(\bcdot,\phi)f$ in $L^2$. Moreover, the family $\{\s(\bcdot,\phi)f^n\}_{n\in\N}$ is bounded in $L^2$ with respect to the $L^2$-norm. Hence
\[
	\int_0^t \s(s,\phi)f^n_s\,ds\stackrel{n\to\infty}{\longrightarrow} \int_0^t \s(s,\phi)f_s\,ds,
\]
uniformly in $t\in[0,T]$, which implies $\Delta_\s^n \to 0$ as $n\to\infty$.
\end{proof}

\begin{proof}[Hypothesis H6]
Let $\{\epsilon_n\}_{n\in\N} \subset (0,1]$ be such that $\epsilon_n\to 0$ as $n\to\infty$, and let $\{v^n\}_{n\in\N}\subset\mathcal{M}^2[0,T]$ be such that, for some constant $N>0$,
\[
	\sup_{n\in\N} \int_0^T |v^n_s(\omega)|^2\,ds < N \quad\text{for $\theta$-almost all }\omega\in\mathcal{W}^m.
\]
For $n\in\N$, let $X^{n,v_n}$ be the solution of Equation~\eqref{ControlSDE2} with $\epsilon = \epsilon_n$, control $v = v^n$, and initial condition $x$. Observe that if $\epsilon\leq 1$, then $\sqrt{\epsilon}\s_\epsilon$ has sublinear growth at infinity with constant $M$, thanks to A3. By Lemma~\ref{LemmaControlSDEGrowth} in the appendix, it follows that for all $p\geq 2$,
\begin{equation}\label{eq:estimate}
	\sup_{n\in\N} \E\left[\sup_{t\in[0,T]}|X^{n,v_n}_t|^p\right]\leq C(1+|x|^p)
\end{equation}
for some finite constant $C=C_{p}(T,N,M)$. Estimate \eqref{eq:estimate}, together with the sublinear growth at infinity of $\sigma$ (according to A1), implies in particular that
\[
	\sup_{n\in\N} \int_0^T \E\left[|\s(s,X^{n,v_n})|^2\right]\,ds <\infty.
\]
It remains to verify that the family $\{X^{n,v_n}\}_{n\in\N}$ is tight. In view of the Kolmogorov tightness criterion (for instance, Theorem~13.1.8 in \citet[pp.\,517-518]{revuzyor}), it suffices to show that there exist strictly positive constants $\alpha$, $\beta$, $\gamma$ such that for all $t,s\in [0,T]$,
\[
	\sup_{n\in\N}\E\left[\left|X^{n,v_n}_t-X^{n,v_n}_s\right|^\alpha\right] \leq \beta|t-s|^{\gamma+1}.
\]
Without loss of generality, let $s<t$. Set $K\doteq 2^{p-1}M^p(T+C(1+|x|^p))$. Exploiting the sublinear growth, we obtain for all $n\in\N$,
\begin{align*}
	\E\left[|X^{n,v_n}_t-X^{n,v_n}_s|^p\right] &\leq 3^{p-1}(t-s)^{p-1}\E\left[\int_s^t |b_{\epsilon_n}(u,X^{n,v_n})|^p\,du\right] \\
	&\quad+ 3^{p-1}\E\left[\left(\int_s^t|\s_{\epsilon_n}(u,X^{n,v_n})|\cdot|v^n|\,du\right)^p\right]\\
	&\quad+ 3^{p-1}(\epsilon_n)^\frac{p}{2} \E\left[\left|\int_s^t \s_{\epsilon_n}(u,X^{n,v_n})\,dW_u\right|^p\right]\\
	&\leq 3^{p-1}K\left((t-s)^p + N^\frac{p}{2}(t-s)^\frac{p}{2} + c_p(\epsilon_n)^\frac{p}{2}(t-s)^\frac{p}{2} \right)\\
&\leq (t-s)^\frac{p}{2}\cdot 3^{p-1}\cdot K \left(T^\frac{p}{2}+N^\frac{p}{2} + (\epsilon_n)^\frac{p}{2}\right).
\end{align*}
The hypotheses of Kolmogorov's criterion are therefore satisfied if we choose $p > 2$ and set $\alpha\doteq p$, $\beta\doteq 3^{p-1} K \left(T^\frac{p}{2}+N^\frac{p}{2} + 1\right)$, $\gamma\doteq \frac{p}{2} - 1 > 0$.
\end{proof}

\begin{proof}[Hypothesis H2 modified.]
In the proof of Theorem~\ref{ThLDP}, hypothesis H2 is only needed to show that for all $t\in[0,T]$,
\[
	\lim_{n\to\infty} \E\left[\Psi_t(X^{n,v^n},v^n)\right] = 0,
\]
where $\Psi_t:\mathcal{W}^d\times S_N\to\R$ is defined by
\[
	\Psi_t(\phi,f)\doteq \left|\phi_t-x-\int_0^t b(s,\phi)\,ds - \int_0^t\s(s,\phi)f_s\,ds\right|\wedge 1.
\]
We show that the same conclusion holds if we assume A3 and A4. Define $b^R_\epsilon\!: [0,T]\times \mathcal{W}^d \rightarrow \R^d$ by
\[
	b^R_\epsilon(s,\phi)\doteq \begin{cases}
    	b_\epsilon(s,\phi) &\text{if } \sup_{u\in[0,s]}|\phi_u|\leq R,\\
    	b_\epsilon\bigl(s,\frac{R}{\|\phi\|_\infty}\phi\bigr) &\text{otherwise.}
  \end{cases}
\]
In the same way, define $\s^R_\epsilon$, $\s^R$, and $b^R$. It is clear that the functions just defined are globally Lipschitz and bounded. Thanks to assumption A4, $b^R_\epsilon\to b^R$ and $\s^R_\epsilon\to \s^R$ uniformly on $[0,T]\times \mathcal{W}^d$. In analogy with $\Psi_t$, set
\[
	\Psi^R_t(\phi,f)\doteq \left|\phi_t-x-\int_0^t b^R(s,\phi)\,ds - \int_0^t\s^R(s,\phi)f_s\,ds\right|\wedge 1.
\]
Observe that if $\sup_{u\in[0,t]}|\phi_u|\leq R$, then $\Psi^R_t(\phi,f)=\Psi_t(\phi,f)$.
Now consider the family $\{X^{R,n}\}$ of solutions to the equation
\[
	dX^{R,n}_t=b^R_{\epsilon_n}(t,X^{R,n})\,dt+\s^R_{\epsilon_n}(t,X^{R,n})v^n_t\,dt+\sqrt{\epsilon_n}\s^R_{\epsilon_n}(t,X^{R,n})\,dW_t,
\]
with $X^{R,n}_0 = x$. The same argument as in Theorem~\ref{ThLDP} yields
\[
	\lim_{n\to\infty} \E\left[\Psi^R_t(X^{R,n},v^n)\right] = 0.
\]
For $R>0$, $n\in\N$, let $\tau_R^n$ denote the time of first exit of $X^{n,v^n}$ from the open ball of radius $R$ centered at the origin. By the locality of the stochastic integral,
\[
	\PR\left(X^{R,n}_t=X^{n,v^n}_t\text{ for all }t\leq \tau_R^n\right) = 1.
\]
On the event $\{t<\tau_R^n\}$ we have $\Psi_t(X^{n,v^n},v^n)=\Psi^R_t(X^{R,n},v^n)$. It follows that
\begin{multline} \label{eq:PsiR}
	\E\left[\Psi_t(X^{n,v^n},v^{n})\right] \\
	= \E\left[\boldsymbol{1}_{t<\tau_R^n}\cdot \Psi_t(X^{n,v^n},v^n)\right] + \E\left[\boldsymbol{1}_{t\geq\tau_R^n}\cdot \Psi_t(X^{n,v^n},v^n)\right]\\
	\leq\E\left[\Psi^R_t(X^{R,n},v^n)\right] + \PR(t\geq\tau_R^n).
\end{multline}
Using the sublinear growth condition and the estimate of Lemma~\ref{LemmaControlSDEGrowth}, we find that for all $n\in \N$,
\[
	\PR\left(t\geq\tau_R^n\right) = \PR\left(\sup_{0\leq s\leq t}|X_t^{R,n}|\geq R\right)\leq \frac{C_2(T,N,M)(1+|x|^2)}{R^2} \doteq \frac{C}{R^2}.
\]
Taking upper limits on both sides of \eqref{eq:PsiR}, we obtain
\[
	\limsup_{n\to\infty} \E\left[\Psi_t(X^{n,v^n},v^n)\right]\leq \limsup_{n\to\infty} \PR(t\geq\tau_R^n)\leq \frac{C}{R^2}.
\]
Since $R > 0$ has been chosen arbitrarily, it follows that
\[
	\lim_{n\to\infty}\E\left[\Psi_t(X^{n,v^n},v^n)\right] = 0.
\]
The job of assumption H2 is therefore carried out by A3 and A4.
\end{proof}

\begin{example}[Freidlin-Wentzell estimates] Let $\bar{b}$, $\bar{\s}$ be measurable functions from $[0,T]\times\R^d$ to $\R^d$ and $\R^{d\times m}$, respectively. Assume that $\bar{b}$, $\bar{\s}$ are locally Lipschitz continuous and satisfy a sublinear growth condition, uniformly in the time variable; that is, for every $R>0$ there exists $L_R>0$ such that for all $t\in[0,T]$, all $y,z\in \R^d$ with $|y|,|z|\leq R$,
\begin{align*}
	&|\bar{b}(t,y)-\bar{b}(t,z)|\leq L_R|y-z|,& &|\bar{\s}(t,y)-\bar{\s}(t,z)|\leq L_R|y-z|,&
\end{align*}
and there exists a constant $M>0$ such that for all $t\in[0,T]$, all $y\in \R^d$,
\begin{align*}
	&|\bar{b}(t,x)|\leq M(1+|x|),& &|\bar{\s}(t,x)|\leq M(1+|x|).&
\end{align*}
Let $X^{\epsilon}$ be the unique strong solution of the stochastic differential equation 
\[
	dX^\epsilon_t = \bar{b}(t,X^{\epsilon}_t)\,dt + \sqrt{\epsilon}\,\bar{\s}(t,X^{\epsilon}_t)\,dW_t
\]
over the time interval $[0,T]$ with initial condition $X^\epsilon_0 = x$. Set $b(t,\phi)\doteq \bar{b}(t,\phi_t)$, $\s(t,\phi)\doteq \bar{\s}(t,\phi_t)$. Then $b$, $\s$ satisfy assumptions A1--A4. By Theorem~\ref{ThLDPLip}, the family $\{X^{\epsilon}\}_{\epsilon>0}$  satisfies the large deviation principle with rate function $I_x\!: \mathcal{W}^d \rightarrow [0,\infty]$ given by
\begin{equation} \label{EqFWRateFnct}
	I_x(\phi)= \inf_{\{f\in L^2([0,T];\R^m):\phi_t =x+\int_0^t \bar{b}(s,\phi_s)\,ds+\int_0^t\bar{\s}(s,\phi_s) f_s\,ds\}} \frac{1}{2}\int_0^T |f_t|^2\,dt
\end{equation}
whenever $\{f\in L^2([0,T];\R^m):\phi_t =x+\int_0^t \bar{b}(s,\phi_s)\,ds+\int_0^t\bar{\s}(s,\phi_s) f_s\,ds\}\neq \emptyset$, and $I_x(\phi)=\infty$ otherwise.
\end{example}

\begin{remark} If $\bar{\s}$ is a square matrix such that $a(t,y)\doteq \bar{\s}(t,y)\trans{\bar{\s}(t,y)}$ is uniformly positive definite, then Equation~\eqref{EqFWRateFnct} simplifies to
\[
	I_x(\phi) =\frac{1}{2}\int_0^T \trans{(\dot{\phi_s}-\bar{b}(s,\phi_s))} a^{-1}(s,\phi_s)(\dot{\phi_s}-\bar{b}(s,\phi_s))\,ds
\]
whenever $\phi\in \mathcal{W}^d$ is absolutely continuous on $[0,T]$ with $\phi_0=x$, and $I_x(\phi) = \infty$ otherwise.
\end{remark}


\section{Two applications}
\label{SectAppl}

In Subsection~\ref{SectApplDelay}, we apply Theorem~\ref{ThLDPLip} to derive the large deviation principle for stochastic systems with memory or delay established in \citet{mohammedzhang}. They consider systems with point delay. Their proof is based on a discretization argument analogous to the method of steps for proving properties (including existence of solutions) of delay differential equations. This allows to derive the large deviation principle for It{\^o} processes with delay from the (well established) large deviation principle for It{\^o} diffusions with time dependent coefficients. The coefficients are assumed to be globally Lipschitz.

In Subsection~\ref{SectApplHoelder}, we go back to Theorem~\ref{ThLDP} to derive the large deviation principle obtained by \citet{baldicaramellino} for a class of positive It\^o diffusions with dispersion coefficient $\s$ of square-root type. In that work, as mentioned in the introduction, a general large deviation principle is established for the diffusion case (the coefficients may actually depend on the parameter $\epsilon$). The assumptions can be summarized as follows \citep[cf.][A.2.3 and Theorem~2.4]{baldicaramellino}: assumptions on $b$, $\s$ in terms of Equation~\eqref{IntegralEquation} equivalent to our hypotheses H4 and H5, including existence of solutions; local Lipschitz continuity of $b_{\epsilon}$, $\s_{\epsilon}$ for $\epsilon > 0$ as well as strong existence (and uniqueness) of solutions for the corresponding prelimit equations; the quasi-continuity property (assumption A.2.3(c) there), which relates the prelimit solutions to solutions of the limit equation \eqref{IntegralEquation}. All assumptions are verified for locally Lipschitz continuous coefficients with sublinear growth at infinity when $b_{\epsilon}$, $\s_{\epsilon}$ converge to $b$, $\sigma$ uniformly on compacts. Although the diffusion coefficient $\sigma$ in the case of positive diffusions is locally Lipschitz only on $\R\setminus\{0\}$, Proposition~\ref{PropUniqueness}, which is Proposition~3.11 in \citet{baldicaramellino}, allows to invoke the large deviation principle for locally Lipschitz coefficients. Here, we use their result only to check that uniqueness holds for the controlled deterministic limit equation (Equation~\eqref{HoelderIntEq} below).

\subsection{Systems with memory}
\label{SectApplDelay}

Let $\bar{b}\!:[0,T]\times\R^d\times\R^d\to\R^d$ and $\bar{\s}\!:[0,T]\times\R^d\times\R^d\to\R^{d\times m}$ be Borel measurable functions. Let us make the following assumptions, which are those of \citet{mohammedzhang}.
\begin{description}
  \item[Q1] The functions $\bar{b}$, $\bar{\s}$ satisfy a global Lipschitz condition; that is, there exists a constant $L>0$ such that for all $x_1,x_2,y_1,y_2\in\R^d$, all $t\in[0,T]$,
	\begin{align*}
		|\bar{b}(s,x_1,y_1)-\bar{b}(s,x_2,y_2)|&\leq L(|x_1-x_2|+|y_1-y_2|), \\
		|\bar{\s}(s,x_1,y_1)-\bar{\s}(s,x_2,y_2)|&\leq L(|x_1-x_2|+|y_1-y_2|).
	\end{align*}
  
  \item[Q2] The functions $\bar{b}(\bcdot,x,y)$, $\bar{\s}(\bcdot,x,y)$ are continuous on $[0,T]$, uniformly in $x,y\in\R^d$.
\end{description}

Let $\tau\in ]0,T[$ and $\psi \in \mathbf{C}([-\tau,0],\R^d)$; $\tau$ will be the length of the (fixed) point delay and $\psi$ the initial segment. For $\epsilon > 0$, consider the stochastic delay differential equation
\begin{equation}\label{eq:delay}
	dX^\epsilon_t = \bar{b}(t,X^\epsilon_t,X^\epsilon_{t-\tau})\,dt + \sqrt{\epsilon}\,\bar{\s}(t,X^\epsilon_t,X^\epsilon_{t-\tau})\,dW_t,
\end{equation}
over $t\in [0,T]$ and with initial condition $X^\epsilon_s=\psi_s$ for all $s\in[-\tau,0]$. Denote by $\mathcal{C}_\psi$ the set of all continuous functions $\phi\!:[-\tau,T]\rightarrow \R^d$ such that $\phi_{s}=\psi_s$ for all $s\in[-\tau,0]$. Let $G_{\psi}$ be the map $L^2([0,T],\R^m)\to \mathcal{C}_\psi$ which takes $f\in L^2([0,T],\R^m)$ to the unique solution of the integral equation
\begin{equation} \label{DelayIntEq}
	\phi_t =\begin{cases}
	\psi_0+\int_0^t\bar{b}(s,\phi_s,\phi_{s-\tau})\,ds+\int_0^t \bar{\s}(s,\phi_s,\phi_{s-\tau})f_s\,ds &\text{if } t\in]0,T],\\
	\psi_t &\text{if } t\in[-\tau,0].
	\end{cases}
\end{equation}

\begin{theorem} \label{ThLDPDelay}
Grant Q1 and Q2. Then the map $G_\psi$ is well defined and the family $\{X^\epsilon\}_{\epsilon>0}$ of solutions of the stochastic delay differential equation \eqref{eq:delay} with initial condition $X^\epsilon_s = \psi_s$ for $s\in [-\tau,0]$ satisfies the large deviation principle with good rate function $I_{\psi}\!: \mathcal{C}_\psi\rightarrow [0,\infty]$ given by
\[
	I_{\psi}(\phi)=\inf_{\{f\in L^2([0,T];\R^m): \phi = G_{\psi}(f)\}}\frac{1}{2}\int_0^T|f_t|^2\,dt
\]
whenever $\{f\in L^2([0,T];\R^m): \phi = G_{\psi}(f)\} \neq \emptyset$, $I_{\psi}(\phi) = \infty$ otherwise.
\end{theorem}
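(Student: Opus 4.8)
The plan is to deduce Theorem~\ref{ThLDPDelay} from Theorem~\ref{ThLDPLip} by encoding the delay equation as an It\^o process with predictable (path-dependent) coefficients on an enlarged but fixed state space. Concretely, I would work on $\mathcal{W}^d = \mathbf{C}([0,T],\R^d)$ with the understanding that a path $\phi\in\mathcal{W}^d$ represents the restriction to $[0,T]$ of an element of $\mathcal{C}_\psi$; the values on $[-\tau,0]$ are fixed to be $\psi$, so a continuous function on $[-\tau,T]$ that agrees with $\psi$ on $[-\tau,0]$ is determined by its restriction to $[0,T]$ together with the matching condition $\phi_0 = \psi_0$. Define the predictable coefficients $b,\s\colon [0,T]\times\mathcal{W}^d\to\R^d,\R^{d\times m}$ by
\[
	b(t,\phi) \doteq \bar b\bigl(t,\phi_t, \widehat\phi_{t-\tau}\bigr), \qquad \s(t,\phi)\doteq \bar\s\bigl(t,\phi_t,\widehat\phi_{t-\tau}\bigr),
\]
where $\widehat\phi_r \doteq \psi_r$ for $r\in[-\tau,0]$ and $\widehat\phi_r\doteq\phi_r$ for $r\in[0,T]$. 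These are manifestly predictable (they depend on $\phi$ only through $\phi|_{[0,t]}$, since $t-\tau < t$), and one takes $b_\epsilon = b$, $\s_\epsilon=\s$, so that H2/A4 are trivial. The initial point is $x\doteq\psi_0$.

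The core of the argument is then to verify assumptions A1--A4 for these $b,\s$ so that Theorem~\ref{ThLDPLip} applies. For A1 (sublinear growth): from Q1 with a reference point, $|\bar b(t,y,z)|\le |\bar b(t,0,0)| + L(|y|+|z|)$, and $|\bar b(t,0,0)|$ is bounded in $t$ by Q2 (continuity on the compact $[0,T]$); hence $|b(t,\phi)|\le M(1+|\phi_t| + \sup_{s\in[0,t]}|\widehat\phi_s|)$, and since $\sup_{s\in[-\tau,0]}|\psi_s|$ is a fixed constant, this is dominated by $M'(1+\sup_{s\in[0,t]}|\phi_s|)$ for a suitable $M'$; same for $\s$. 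For A2 (local, in fact global, Lipschitz in the path): $|b(t,\phi)-b(t,\tilde\phi)|\le L(|\phi_t-\tilde\phi_t| + |\widehat\phi_{t-\tau}-\widehat{\tilde\phi}_{t-\tau}|) \le 2L\sup_{s\in[0,t]}|\phi_s-\tilde\phi_s|$, where the delayed difference either vanishes (if $t-\tau\le 0$, both equal $\psi_{t-\tau}$) or equals $|\phi_{t-\tau}-\tilde\phi_{t-\tau}|$; this even gives a global Lipschitz constant, which is more than A2 asks. A3 is automatic since $b_\epsilon=b$, $\s_\epsilon=\s$, and A4 is automatic since the convergence is exact. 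Thus Theorem~\ref{ThLDPLip} yields the Laplace principle, equivalently (the rate function being good) the large deviation principle, for $\{X^\epsilon|_{[0,T]}\}$ on $\mathcal{W}^d$ with rate function $I_x(\phi)=\inf\{\tfrac12\int_0^T|f|^2\,ds : \Gamma_x(f)=\phi\}$, where $\Gamma_x(f)$ solves \eqref{IntegralEquation} with these coefficients.

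It then remains to transfer this statement from $\mathcal{W}^d$ to $\mathcal{C}_\psi$ and to identify $\Gamma_x$ with $G_\psi$ and $I_x$ with $I_\psi$. The map $\iota\colon \mathcal{C}_\psi\to\mathcal{W}^d$, $\phi\mapsto\phi|_{[0,T]}$, is a homeomorphism onto its image, which is the closed affine subset $\{\phi\in\mathcal{W}^d:\phi_0=\psi_0\}$; a large deviation principle is preserved under such an identification (the rate function is $+\infty$ off the image, consistently with $\Gamma_x(f)_0=x=\psi_0$ always). Unwinding the definition of $\Gamma_x$: a path $\phi\in\mathcal{W}^d$ with $\phi_0=\psi_0$ solves \eqref{IntegralEquation} for these coefficients iff its extension $\widehat\phi\in\mathcal{C}_\psi$ solves \eqref{DelayIntEq} with control $f$ (for $t>0$ the integrands coincide by construction, and on $[-\tau,0]$ the extension equals $\psi$ by definition), so $\widehat{\Gamma_x(f)}=G_\psi(f)$; in particular $G_\psi$ is well defined because $\Gamma_x$ is (H4, which Theorem~\ref{ThLDPLip}'s hypotheses guarantee via the argument of Section~\ref{SectLipschitz}). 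Consequently $I_x(\iota(\phi))=I_\psi(\phi)$ for $\phi\in\mathcal{C}_\psi$, and the large deviation principle for $\{X^\epsilon\}$ on $\mathcal{C}_\psi$ with rate function $I_\psi$ follows.

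I do not expect a genuine obstacle here; the theorem is essentially a corollary of Theorem~\ref{ThLDPLip}, and the only points requiring a little care are (i) absorbing the fixed initial segment $\psi$ into the growth and Lipschitz constants when checking A1--A2, and (ii) the bookkeeping of the homeomorphism between $\mathcal{C}_\psi$ and the affine subspace $\{\phi_0=\psi_0\}\subset\mathcal{W}^d$ so that goodness of the rate function and the large deviation principle transfer cleanly. The one spot where Q2 (rather than just Q1) is genuinely used is in bounding $\sup_{t}|\bar b(t,0,0)|$ and $\sup_t|\bar\s(t,0,0)|$, i.e.\ in getting A1 with time-uniform constants; without continuity in $t$ one would only have measurability and the sublinear growth bound could fail.
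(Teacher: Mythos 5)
Your proposal is correct and follows essentially the same route as the paper: the same path-dependent coefficients $b(t,\phi)=\bar b(t,\phi_t,\widehat\phi_{t-\tau})$ (the paper writes this with indicators of $[0,\tau[$ and $[\tau,T]$), the same verification of A1--A2 using Q2 to bound $\sup_t|\bar b(t,0,0)|$ and absorbing the fixed segment $\psi$ into the constants, an appeal to Theorem~\ref{ThLDPLip}, and a transfer to $\mathcal{C}_\psi$ via the continuous identification of $\{\phi\in\mathcal{W}^d:\phi_0=\psi_0\}$ with $\mathcal{C}_\psi$ (the paper phrases this last step as the contraction principle applied to the extension map $\Phi$). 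No gaps.
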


\begin{proof}
Define a function $\Phi\!: \mathcal{W}^d \rightarrow \mathcal{C}_\psi$ according to
\[
	\Phi[\phi](s)\doteq \begin{cases}
		\psi_s\cdot \boldsymbol{1}_{[-\tau,0]}(s) + \phi_s\cdot \boldsymbol{1}_{]0,T]}(s) &\text{if }\phi_0 = \psi_0,\\
		\psi_s\cdot \boldsymbol{1}_{[-\tau,0]}(s) + \psi_0\cdot \boldsymbol{1}_{]0,T]}(s) &\text{otherwise.}
	\end{cases}
\]
Define mappings $b$, $\s$ from $[0,T]\times\mathcal{W}^d$ to $\R^d$ and to $\R^{d\times m}$, respectively, according to
\begin{align*}
	b(s,\phi)&\doteq \bar{b}(s,\phi_s,\psi_{s-\tau})\cdot \boldsymbol{1}_{[0,\tau[}(s) + 
      \bar{b}(s,\phi_s,\phi_{s-\tau})\cdot \boldsymbol{1}_{[\tau,T]}(s), \\
	\s(s,\phi)&\doteq \bar{\s}(s,\phi_s,\psi_{s-\tau})\cdot \boldsymbol{1}_{[0,\tau[}(s) + 
      \bar{\s}(s,\phi_s,\phi_{s-\tau})\cdot \boldsymbol{1}_{[\tau,T]}(s),
\end{align*}
and consider the stochastic differential equation
\begin{equation}\label{eq:SDEdelay}
	dY^\epsilon_t = b(t,Y^\epsilon)\,dt + \sqrt{\epsilon}\,\s(t,Y^\epsilon)\,dW_t
\end{equation}
over $[0,T]$ with initial condition $Y^\epsilon_0 = \psi_0$. We show that the functions $b$ and $\s$ enjoy assumptions A1--A4 of Section~\ref{SectLipschitz}. Since the coefficients do not depend on $\epsilon$, it suffices to verify A1 and A2. We check the assumptions only for $b$, the work for $\s$ being completely analogous. Let us start with A1. Thanks to Q1 we have
\[
	|\bar{b}(t,x,y)|\leq L(|x|+|y|)+|\bar{b}(t,0,0)|.
\]
By Q2 it follows that $\sup_{t\in[0,T]}|\bar{b}(t,0,0)|<\infty$. Let $\phi\in\mathcal{W}^d$. Then
\begin{equation*}
	|b(s,\phi)| \leq \begin{cases}
	L(|\phi_s|+|\psi_{s-\tau}|)+|\bar{b}(s,0,0)| &\text{if } s\in[0,\tau[,\\
	L(|\phi_s|+|\phi_{s-\tau}|)+|\bar{b}(s,0,0)| &\text{if } s\in[\tau,T].
\end{cases}
\end{equation*}
Set $M \doteq 2L\vee (\sup_{t\in[0,T]}|\bar{b}(t,0,0)| + \sup_{s\in[-\tau,0]}L|\psi_s|)$. Then $|b(s,\phi)|\leq M(1+\sup_{t\in[0,s]}|\phi_t|)$, which yields A1. Next we verify A2. Let $\phi, \tilde{\phi}\in\mathcal{W}^d$. Then, thanks to Q1,
\begin{equation*}
	|b(s,\phi)-b(s,\tilde{\phi})| \leq \begin{cases}
	L|\phi_s-\tilde{\phi}_s| &\text{if } s\in [0,\tau[, \\
	L\left(|\phi_s-\tilde{\phi}_s|+|\phi_{s-\tau}-\tilde{\phi}_{s-\tau}|\right) &\text{if } s\in [\tau,T].
	\end{cases}
\end{equation*}
Thus $b(t,\bcdot)$ is globally Lipschitz continuous with constant $2L$, uniformly in $t\in[0,T]$.

Since $b$, $\s$ satisfy both A1 and A2, Theorem~\ref{ThLDPLip} applies and yields that the family $\{Y^\epsilon\}_{\epsilon>0}$ of solutions of Equation~\eqref{eq:SDEdelay} with initial condition $Y^\epsilon_0 = \psi_0$ satisfies the large deviation principle with good rate function $J\!: \mathcal{W}^d \rightarrow [0,\infty]$ given by
\[
	J(\phi) = \inf_{\{f\in L^2([0,T];\R^m):\phi = \Gamma(f)\}} \frac{1}{2}\int_0^T |f_t|^2\,dt,
\]
where $\inf\emptyset = \infty$ by convention and $\Gamma \doteq \Gamma_{\psi_{0}}$ as in H4. In particular, $\Gamma$ is well defined as the mapping $L^{2}([0,T],\R^m) \rightarrow \mathcal{W}^d$ that takes $f\in L^{2}([0,T],\R^m)$ to the unique solution of Equation~\eqref{IntegralEquation}, that is, to the unique solution $\phi\in \mathcal{W}^d$ of the integral equation
\[
	\phi_t= x + \int_0^t b(s,\phi)\,ds + \int_0^t \s(s,\phi)f_s\,ds,\quad t\in [0,T].
\]
Now let $\phi \in \mathcal{C}_{\psi}$. Then $\phi$ solves the integral equation \eqref{DelayIntEq} with $f\in L^{2}([0,T],\R^m)$ if and only if $\phi_{|[0,T]} = \Gamma(f)$. Recalling the definition of $b$, $\s$, it follows that Equation~\eqref{DelayIntEq} has a unique solution and that the mapping $G$ is well defined. Moreover, for every $\epsilon > 0$, Equation~\eqref{eq:delay} possesses a unique strong solution $X^{\epsilon}$ with initial segment $\psi$, and $X^{\epsilon} = \Phi[Y^{\epsilon}]$ $\theta$-almost surely.

Set $\boldsymbol{C}_{\psi_0}\doteq \{\phi\in \mathcal{W}^{d} : \phi_0 = \psi_0\}$. Observe that the effective domain of $J$, namely $\mathcal{D}_J \doteq \{\phi\in\mathcal{W}^d: J(\phi)<\infty\}$, is contained in $\boldsymbol{C}_{\psi_0}$. The map $\Phi$ is continuous on $\boldsymbol{C}_{\psi_0}$ (in fact a continuous bijection $\boldsymbol{C}_{\psi_0} \rightarrow \mathcal{C}_\psi$). Since the processes $Y^{\epsilon}$ take values in $\boldsymbol{C}_{\psi_0}$ and $X^{\epsilon} = \Phi[Y^{\epsilon}]$, it follows by the contraction principle (see, for instance, Theorem 4.2.1 with Remark (c) in \citet[pp.\,126-127]{dembozeitouni}) that the family $\{X^\epsilon\}_{\epsilon>0}$ satisfies the large deviation principle with good rate function $I\!: \mathcal{C}_{\psi} \rightarrow [0,\infty]$ given by
\begin{align*}
	I(\bar{\phi})&= \inf\left\{J(\phi) : \phi\in \mathcal{W}^d \text{ such that }\Phi[\phi] = \bar{\phi}\right\} \\
	&= J(\bar{\phi}_{|[0,T]})\\
	&=\inf_{\{f\in L^2([0,T];\R^m):\bar{\phi}_{|[0,T]} = \Gamma(f)\}} \frac{1}{2}\int_0^T|f_t|^2\,dt\\
	&=\inf_{\{f\in L^2([0,T];\R^m):\bar{\phi}=G(f)\}} \frac{1}{2}\int_0^T |f_t|^2\,dt.
\end{align*}
\end{proof}

\begin{remark} \label{RemDelayModels}
A closer look at the proof above shows that we can generalize without any effort the result of \citet{mohammedzhang}. We can assume the coefficients to be locally Lipschitz continuous and depend on $\epsilon$ as well, provided that a sublinear growth condition is satisfied and that $\bar{b}_\epsilon\to b$ and $\bar{\s}_\epsilon\to \s$. In particular the uniform continuity condition Q2 is no longer needed, and it suffices to assume predictability of the coefficients. With the approach used here, the large deviation analysis can be performed in the same way also for other delay models such as distributed delay or dependence on the running maximum; in those cases the coefficients could be (locally) Lipschitz functions of expressions like
\begin{align*}
	&\int_{-\tau}^{0} g(\psi_{s})ds,& &\sum_{s\in J} g_{s}(\psi_{s}),& &\max_{s\in[-\tau,0]} g(\psi_{s}),&
\end{align*}
where $g$, $g_{s}$ are suitable functions, $J\subset [-\tau,0]$ a countable set. These generalizations would be difficult to obtain with a method-of-steps approach.
\end{remark}


\subsection{Positive diffusions with H{\"o}lder dispersion coefficient}
\label{SectApplHoelder}

In this subsection, we derive the large deviation principle for a class of scalar It{\^o} diffusions where the dispersion coefficient $\s$ is positive away from zero and H{\"o}lder continuous with exponent $\gamma\geq\frac{1}{2}$. We can rely on the work by \citet{baldicaramellino} in proving uniqueness for the deterministic limit system \eqref{IntegralEquation} as required by Hypothesis H4, see Proposition~\ref{PropUniqueness} below; then we invoke Theorem~\ref{ThLDP}.

Let $W$ denote a one-dimensional Brownian motion. Slightly changing notation, let $x_{0} > 0$ be the initial condition and consider, for $\epsilon > 0$, the scalar stochastic differential equation
\begin{equation} \label{SDEHoelder}
	dX^\epsilon_t = \bar{b}(X^\epsilon_t)\,dt + \sqrt{\epsilon}\bar{\s}(X^\epsilon_t)\,dW_t
\end{equation}
with $X^\epsilon_0 = x_0$. We make the following assumptions on the coefficients $\bar{b}$, $\bar{\s}$, which we take independent of $\epsilon$ for the sake of simplicity.
\begin{description}
   \item[R1] The dispersion coefficient $\bar{\s}\!:\R\rightarrow [0,\infty)$ is locally Lipschitz continuous on $\R\setminus\{0\}$, has sublinear growth at infinity, $\bar{\s}(0) = 0$, while $\bar{\s}(x) > 0$ for all $x\neq 0$. Moreover, there exists a continuous increasing function $\rho\!:(0,\infty) \rightarrow (0,\infty)$ such that $\int_{0+}^{\infty} \rho^{-2}(u)\,du = +\infty$ and
\[
	|\bar{\s}(x)-\bar{\s}(y)|\leq \rho(|x-y|)\quad\text{for all } x,y\in\R,\; x\neq y.
\]

   \item[R2] The drift coefficient $\bar{b}:\R\to\R$ is locally Lipschitz continuous, has sublinear growth at infinity, and $\bar{b}(0) > 0$.
\end{description}

Condition R1 is satisfied, in particular, if $\bar{\sigma}(x) = \sqrt{|x|}$. The large deviation principle will be derived from Theorem \ref{ThLDP}. To this end, set
\begin{align*}
	& \s(s,\phi) \doteq \bar{\s}(\phi_s), &b(s,\phi) = \bar{b}(\phi_s), & &(s,\phi)\in [0,T]\times \mathcal{W}^1.&
\end{align*}
Let us check that hypotheses H1--H6 hold for $b$, $\s$. Since $\bar{b}$, $\bar{\s}$ are continuous, $b$, $\s$ are predictable with $b(t,\bcdot)$, $\s(t,\bcdot)$ uniformly continuous on all bounded subsets of $\mathcal{W}^1$, uniformly in $t\in [0,T]$. Moreover, given any $\phi \in \mathcal{W}^1$, $\s(\bcdot,\phi)$ is bounded by $M(1+\|\phi\|_\infty)$ for some $M$ independent of $\phi$ thanks to the sublinear growth condition, hence square-integrable. Thus H1 holds. Hypothesis H2 is clearly satisfied as $b_\epsilon \equiv b$ and $\s_\epsilon \equiv \s$. Under R1 and R2, pathwise uniqueness holds for Equation~\eqref{SDEHoelder} (or \eqref{SDE2} with $b$, $\s$ as above); this follows from Theorem~1 in \citet{yamadawatanabe}. Continuity and sublinear growth of the coefficients implies existence of a weak solution (for instance, Theorems 2.3 and 2.4 in \citet{ikedawatanabe}), which together with pathwise uniqueness actually implies that any solution is strong (Corollary~3 in \citet{yamadawatanabe} or Theorem IX.1.7 in \citet[p.\,368]{revuzyor}). Accordingly, hypothesis H3 holds. The fact that hypothesis H4 holds is a consequence of Remark~\ref{RemH4} and Proposition~\ref{PropUniqueness} stated next, which can be proved exactly as Proposition~3.11 in \citet{baldicaramellino}.

\begin{proposition} \label{PropUniqueness} Grant R1 and R2. Let $f\in L^2([0,T])$. Then uniqueness of solutions holds for the integral equation
\begin{equation} \label{HoelderIntEq}
	\phi_t = x_0+\int_0^t \bar{b}(\phi_s)\,ds+\int_0^t\bar{\s}(\phi_s)f_s\,ds.
\end{equation}
Moreover, for every $N>0$ there exists $\eta>0$ such that $\inf_{t\in[0,T]}\phi_t\geq\eta$ whenever $\phi$ is a solution of \eqref{HoelderIntEq} and $\|f\|_{L^2} < N$.
\end{proposition}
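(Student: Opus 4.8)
\emph{Proof strategy.} The plan is to establish the \emph{moreover} part (the uniform positive lower bound) first, because uniqueness will then follow by a routine Gronwall argument. The proof has three ingredients: an a priori sup-norm bound on solutions; a Lyapunov-function estimate producing the lower bound; and the Gronwall argument for uniqueness.

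First I would record that every solution $\phi$ of \eqref{HoelderIntEq} with $\|f\|_{L^{2}}<N$ satisfies $\|\phi\|_{\infty}\leq R$ for some $R=R(x_{0},M,T,N)$: the sublinear growth of $\bar{b},\bar{\s}$ gives $1+\sup_{s\leq t}|\phi_{s}|\leq(1+x_{0})+M\int_{0}^{t}\bigl(1+\sup_{u\leq s}|\phi_{u}|\bigr)(1+|f_{s}|)\,ds$, and Gronwall applies, as in the derivation of \eqref{IntEqBound}. Next, since $\bar{b}$ is continuous with $\bar{b}(0)>0$, fix $\delta_{0}\in(0,1\wedge x_{0})$ and $\beta>0$ with $\bar{b}\geq\beta$ on $[0,\delta_{0}]$, and introduce $V\colon(0,\infty)\to[0,\infty)$, $V(x)\doteq\int_{x\wedge1}^{1}\rho(u)^{-2}\,du$. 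By the Osgood-type condition $\int_{0+}^{\infty}\rho^{-2}=+\infty$ in R1, $V$ is continuous, nonincreasing, locally Lipschitz on $(0,\infty)$ with $V'(x)=-\rho(x)^{-2}\boldsymbol{1}_{\{x<1\}}$, and $V(x)\to+\infty$ as $x\downarrow0$. For $\epsilon\in(0,\delta_{0})$ let $\tau_{\epsilon}\doteq\inf\{t\in[0,T]:\phi_{t}=\epsilon\}\wedge T$; then $\phi\in[\epsilon,R]$ on $[0,\tau_{\epsilon}]$, so $V\circ\phi$ is absolutely continuous there.

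The heart of the matter is a bound on $V(\phi_{t\wedge\tau_{\epsilon}})$ uniform in $\epsilon$ and $t$. Using $\dot{\phi}_{s}=\bar{b}(\phi_{s})+\bar{\s}(\phi_{s})f_{s}$ a.e.\ and the chain rule,
\[
V(\phi_{t\wedge\tau_{\epsilon}})=V(x_{0})+\int_{0}^{t\wedge\tau_{\epsilon}}V'(\phi_{s})\bar{b}(\phi_{s})\,ds+\int_{0}^{t\wedge\tau_{\epsilon}}V'(\phi_{s})\bar{\s}(\phi_{s})f_{s}\,ds.
\]
The drift integral is at most $AT$ for a constant $A$: it vanishes where $\phi_{s}\geq1$, is bounded where $\delta_{0}\leq\phi_{s}<1$ ($\rho$ bounded below and $\bar{b}$ bounded above on $[\delta_{0},R]$), and is nonpositive where $\phi_{s}<\delta_{0}$ (since $V'\leq0$ and $\bar{b}\geq\beta>0$). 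For the second integral, $\bar{\s}(x)=|\bar{\s}(x)-\bar{\s}(0)|\leq\rho(|x|)$ yields $|V'(\phi_{s})\bar{\s}(\phi_{s})|\leq\rho(\phi_{s})^{-1}\boldsymbol{1}_{\{\phi_{s}<1\}}$, so by H\"older it is bounded in absolute value by $N\,I_{1}^{1/2}$, where $I_{1}\doteq\int_{0}^{t\wedge\tau_{\epsilon}}\rho(\phi_{s})^{-2}\boldsymbol{1}_{\{\phi_{s}<1\}}\,ds$. Splitting $I_{1}$ at $\delta_{0}$, the part with $\delta_{0}\leq\phi_{s}<1$ is bounded by a constant $C_{0}$, and for $I\doteq\int_{0}^{t\wedge\tau_{\epsilon}}\rho(\phi_{s})^{-2}\boldsymbol{1}_{\{\phi_{s}<\delta_{0}\}}\,ds$ (finite because $\phi\geq\epsilon$) I would argue as follows: on $\{\phi_{s}<\delta_{0}\}$, using $\bar{b}\geq\beta$ and $\bar{\s}\leq\rho$,
\[
\rho(\phi_{s})^{-2}\leq\beta^{-1}\bar{b}(\phi_{s})\rho(\phi_{s})^{-2}=\beta^{-1}\bigl(\dot{\phi}_{s}\rho(\phi_{s})^{-2}-\bar{\s}(\phi_{s})f_{s}\rho(\phi_{s})^{-2}\bigr)\leq\beta^{-1}\bigl((\dot{\phi}_{s})^{+}\rho(\phi_{s})^{-2}+|f_{s}|\rho(\phi_{s})^{-1}\bigr).
\]
Integrating, the $|f_{s}|\rho(\phi_{s})^{-1}$ term contributes at most $\beta^{-1}N\,I^{1/2}$ by H\"older. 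For the $(\dot{\phi}_{s})^{+}$ term, decompose $\{\phi_{s}<\delta_{0}\}$ into its relatively open connected components: $\phi$ enters and leaves each at level $\delta_{0}$, so $\int\dot{\phi}_{s}\rho(\phi_{s})^{-2}\boldsymbol{1}_{\{\phi_{s}<\delta_{0}\}}\,ds\leq0$ (the ``upward'' weighted variation is dominated by the ``downward'' one), while where $\dot{\phi}_{s}<0$ and $\phi_{s}<\delta_{0}$ one has $(\dot{\phi}_{s})^{-}\leq\bar{\s}(\phi_{s})|f_{s}|$; hence $\int(\dot{\phi}_{s})^{+}\rho(\phi_{s})^{-2}\boldsymbol{1}_{\{\phi_{s}<\delta_{0}\}}\,ds\leq\int\bar{\s}(\phi_{s})|f_{s}|\rho(\phi_{s})^{-2}\boldsymbol{1}_{\{\phi_{s}<\delta_{0}\}}\,ds\leq N\,I^{1/2}$ by H\"older. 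Altogether $I\leq2\beta^{-1}N\,I^{1/2}$, i.e.\ $I\leq4N^{2}\beta^{-2}$. Combining, $V(\phi_{t\wedge\tau_{\epsilon}})\leq V(x_{0})+AT+N(C_{0}+4N^{2}\beta^{-2})^{1/2}=:C$, independent of $\epsilon,t$. Evaluating at the first time (if any) at which $\phi$ reaches a level $\epsilon\in(0,\delta_{0})$ gives $V(\epsilon)\leq C$; since $V\uparrow+\infty$ at $0$, there is $\eta>0$ depending only on $N$ and the data with $V(\eta)>C$, so $\phi$ never reaches $\eta$, and by continuity with $\phi_{0}=x_{0}>\eta$ we get $\inf_{t}\phi_{t}\geq\eta$. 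This step, where both the Osgood condition $\int_{0+}\rho^{-2}=\infty$ and the sign condition $\bar{b}(0)>0$ are indispensable and the $L^{2}$-budget of $f$ absorbs the $\rho^{-2}$-occupation near the origin, is the main obstacle; everything else is bookkeeping.

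Finally, uniqueness: given $f\in L^{2}([0,T])$, put $N\doteq\|f\|_{L^{2}}+1$, so any two solutions $\phi,\psi$ of \eqref{HoelderIntEq} lie in $[\eta,R]$ on $[0,T]$. On the compact set $[\eta,R]\subset\R\setminus\{0\}$, $\bar{b}$ is Lipschitz by R2 and $\bar{\s}$ is Lipschitz by R1, so $|\phi_{t}-\psi_{t}|\leq L'\int_{0}^{t}|\phi_{s}-\psi_{s}|\,ds+L''\int_{0}^{t}|\phi_{s}-\psi_{s}|\,|f_{s}|\,ds$; squaring, applying H\"older and Gronwall exactly as in the proof of H4--H5 in Section~\ref{SectLipschitz} gives $\phi\equiv\psi$. (Existence of solutions to \eqref{HoelderIntEq}, needed for the statement to be non-vacuous, follows from Remark~\ref{RemH4}, or directly from continuity and sublinear growth of $\bar{b},\bar{\s}$.)
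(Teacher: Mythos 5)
Your proof is correct, and its overall architecture matches the paper's: establish the uniform lower bound $\inf_t\phi_t\ge\eta$ first, then deduce uniqueness by replacing $\bar b,\bar\s$ with coefficients that are locally Lipschitz on all of $\R$ and agree with them on $[\eta,\infty)$, so that the Gronwall argument of Section~\ref{SectLipschitz} applies. The difference is entirely in how the lower bound is obtained, and there your route is genuinely different. The paper takes $V(x)=-2\int_{\bar x}^{x}\bar b(z)\bar\s(z)^{-2}\,dz$, i.e.\ the solution of the stationary Hamilton--Jacobi equation $H(x,V'(x))=0$ for $H(x,p)=\bar b(x)p+\tfrac12\bar\s(x)^2p^2$; the Fenchel--Young inequality $H(x,p)+\mathcal L(x,v)\ge v\cdot p$ then gives in one line $V(\phi_{t_2})-V(\phi_{t_1})\le\int\mathcal L(\phi_s,\dot\phi_s)\,ds=\tfrac12\int|f_s|^2\,ds$, and since $\bar b/\bar\s^2\ge\beta\rho^{-2}$ near the origin, $V$ blows up there faster than the available $L^2$-budget, which is the contradiction. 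You instead take the Osgood-type Lyapunov function $V(x)=\int_{x\wedge1}^{1}\rho(u)^{-2}\,du$, for which the drift term has the right sign but the control term $V'(\phi)\bar\s(\phi)f$ is not automatically absorbed; you handle it by Cauchy--Schwarz against the occupation integral $I=\int\rho(\phi_s)^{-2}\boldsymbol{1}_{\{\phi_s<\delta_0\}}\,ds$, which you then control through the self-improving inequality $I\le2\beta^{-1}N I^{1/2}$ (legitimate because $I<\infty$ on $[0,\tau_\epsilon]$), exploiting the sign decomposition of $\dot\phi$ and the fact that the weighted signed variation over each excursion below $\delta_0$ is nonpositive. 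Both arguments rest on exactly the two hypotheses $\bar b(0)>0$ and $\int_{0+}\rho^{-2}=+\infty$; the paper's is shorter because the quasipotential makes the duality do all the work, while yours is more elementary (no Legendre transform) at the price of the occupation-time bookkeeping. The technical points you pass over quickly --- the chain rule for the Lipschitz-but-not-$C^1$ function $V$ at $x=1$, and summing over the countably many components of $\{\phi<\delta_0\}$ --- are standard and harmless.
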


Proposition~\ref{PropUniqueness} also implies that hypothesis H5 is satisfied. The map $\Gamma_x$ which takes $f\in S_N$ to the unique solution of the integral equation
\[
\phi_t = x+\int_0^t b(\phi_s)\,ds+\int_0^t\s(\phi_s)f_s\,ds
\]
coincides with the map defined by replacing $\s$ with a function which is locally Lipschitz on the whole $\R$ and equals $\s$ outside a sufficiently small neighborhood of zero. Indeed, there exists $\xi>0$ such that, for all $f\in S_N$, $\Gamma_x(f)\geq \xi$. Therefore, $\Gamma_x$ is continuous on $S_N$ endowed with the weak topology of $L^2$, as a consequence of what we have shown in Section~\ref{SectLipschitz} in the case of locally Lipschitz continuous coefficients.

Finally, by assumptions R1 and R2, the coefficients $b$, $\sigma$ have sublinear growth at infinity. Based on this property, we can argue exactly as in Section~\ref{SectLipschitz} to show that H6 holds.

\begin{theorem} \label{ThLDPHoelder}
Grant R1 and R2. Then the family $\{X^\epsilon\}_{\epsilon>0}$ of solutions of the stochastic differential equation \eqref{SDEHoelder} with initial condition $x_0$ satisfies the large deviation principle with good rate function $I\!: \mathbf{C}([0,T],\R)\rightarrow [0,\infty]$ given by
\[
	I(\phi)= \frac{1}{2} \int_0^T \frac{(\dot{\phi}_t-\bar{b}(\phi_t))^2}{\bar{\s}^2(\phi_t)}dt
\]
whenever $\phi$ is absolutely continuous on $[0,T]$ such that $\phi_0 = x_0$ and $(\dot{\phi}-\bar{b})/\bar{\sigma}(\phi)\in L^{2}([0,T],\mathbb{R})$, and $I(\phi)=\infty$ otherwise.
\end{theorem}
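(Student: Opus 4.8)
The plan is to deduce the result from Theorem~\ref{ThLDP} and then rewrite the variational rate function in the explicit form stated. As already verified in the discussion preceding Proposition~\ref{PropUniqueness}, hypotheses H1--H6 hold for the coefficients $b(s,\phi)=\bar{b}(\phi_s)$, $\s(s,\phi)=\bar{\s}(\phi_s)$. Hence Theorem~\ref{ThLDP} applies: the family $\{X^\epsilon\}_{\epsilon>0}$ satisfies the Laplace principle, and therefore, the rate function being good, the large deviation principle, with rate function
\[
	I_{x_0}(\phi)=\inf\Bigl\{\tfrac12\textstyle\int_0^T|f_t|^2\,dt : f\in L^2([0,T]),\ \Gamma_{x_0}(f)=\phi\Bigr\},
\]
where $\Gamma_{x_0}$ is the solution map of the integral equation~\eqref{HoelderIntEq}. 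It remains to show that $I_{x_0}$ coincides with the functional $I$ in the statement.

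First I would analyze $\phi$ in the range of $\Gamma_{x_0}$. If $\phi=\Gamma_{x_0}(f)$, then, by Proposition~\ref{PropUniqueness}, $\phi_t\geq\eta>0$ for all $t\in[0,T]$, so $\bar{\s}(\phi_t)>0$; moreover $\phi$ is absolutely continuous (it is the sum of a Lipschitz term and an $L^1$-integral) with $\dot{\phi}_t=\bar{b}(\phi_t)+\bar{\s}(\phi_t)f_t$ for a.e.\ $t$, whence $f$ is uniquely determined up to a Lebesgue-null set by $f_t=(\dot{\phi}_t-\bar{b}(\phi_t))/\bar{\s}(\phi_t)$. Consequently the infimum defining $I_{x_0}(\phi)$ is over a single element and $I_{x_0}(\phi)=\tfrac12\int_0^T|f_t|^2\,dt=I(\phi)$. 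In particular $I_{x_0}$ and $I$ agree on the effective domain of $I_{x_0}$, which, by Proposition~\ref{PropUniqueness}, consists only of absolutely continuous paths starting at $x_0$ and bounded away from $0$.

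Second, conversely, I would show that $I(\phi)<\infty$ forces $\phi$ into exactly that domain. Let $\phi$ be absolutely continuous with $\phi_0=x_0$ and $I(\phi)<\infty$. Since the derivative of an absolutely continuous function vanishes a.e.\ on each of its level sets while $\bar{b}(0)>0=\bar{\s}(0)$, the set $\{t:\phi_t=0\}$ is Lebesgue-null; more strongly, $\phi$ cannot come arbitrarily close to $0$, by re-running the Lyapunov/Legendre-transform argument of Proposition~\ref{PropUniqueness}: if $\phi$ dropped from $\bar{x}$ to an arbitrarily small level $\xi$ on some interval $[t_1,t_2]\subseteq[0,T]$ with $\phi_t\in[\xi,\bar{x}]$ there, then $I(\phi)\geq\int_{t_1}^{t_2}\mathcal{L}(\phi_t,\dot{\phi}_t)\,dt\geq V(\xi)\geq 2\beta\int_\xi^{\bar{x}}\rho^{-2}(r)\,dr\to+\infty$ as $\xi\to0^+$, a contradiction; hence $\inf_{[0,T]}\phi>0$ (and, $\phi$ being continuous with $\phi_0=x_0>0$, this also precludes negative values). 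With $\phi$ bounded away from $0$, set $f_t\doteq(\dot{\phi}_t-\bar{b}(\phi_t))/\bar{\s}(\phi_t)$; then $f\in L^2([0,T])$ because $\int_0^T|f_t|^2\,dt=2I(\phi)<\infty$, and integrating $\dot{\phi}_t=\bar{b}(\phi_t)+\bar{\s}(\phi_t)f_t$ gives $\phi=\Gamma_{x_0}(f)$, so $I_{x_0}(\phi)\leq\tfrac12\int_0^T|f_t|^2\,dt=I(\phi)$. Combined with the first step, $I_{x_0}$ and $I$ have the same effective domain and agree on it; off that domain both equal $+\infty$. Hence $I_{x_0}=I$, and the large deviation principle holds with rate function $I$.

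The step I expect to be the main obstacle is the second one: showing that mere finiteness of the explicit rate function already forces the trajectory to stay uniformly away from zero, so that the candidate control $f=(\dot{\phi}-\bar{b}(\phi))/\bar{\s}(\phi)$ is a genuine $L^2$ function realizing $\phi$ as $\Gamma_{x_0}(f)$. This is precisely where the boundary behaviour of $\bar{\s}$ at $0$ (encoded in $\int_{0+}\rho^{-2}=+\infty$) and the sign condition $\bar{b}(0)>0$ enter, and it requires adapting the Lyapunov argument of Proposition~\ref{PropUniqueness} to a setting in which no a priori bound on $\|f\|_{L^2}$ is given, using instead the bound $\|f\|_{L^2}^2=2I(\phi)$ furnished by the hypothesis.
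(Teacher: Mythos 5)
Your proposal is correct and follows essentially the same route as the paper: verify H1--H6, invoke Theorem~\ref{ThLDP}, and then identify the variational rate function with the explicit one via Proposition~\ref{PropUniqueness}. The only (minor, and in fact welcome) difference is in the converse direction, where you derive $\inf_{[0,T]}\phi>0$ directly from $I(\phi)<\infty$ by re-running the Lyapunov argument with $\|f\|_{L^2}^2=2I(\phi)$, whereas the paper first defines $f=(\dot{\phi}-\bar{b}(\phi))/\bar{\s}(\phi)$ (implicitly using that finiteness of the integral forces $\{\phi=0\}$ to be Lebesgue-null) and only then invokes Proposition~\ref{PropUniqueness}; both orderings are valid, and yours makes explicit a point the paper leaves tacit.
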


\begin{proof} We have already checked that R1 and R2 imply H1--H6. Theorem~\ref{ThLDP} therefore yields the large deviation principle for the family $\{X^\epsilon\}_{\epsilon>0}$ with good rate function $J = J_{x_0}$ given by
\[
	J(\phi) = \inf_{\{f\in L^2([0,T],\R): \phi_t = x_0 + \int_0^t \bar{b}(\phi_s)\,ds +\int_0^t \bar{\s}(\phi_s)f_s\,ds\}} \frac{1}{2} \int_0^T |f_t|^2\,dt
\]
whenever $\{f\in L^2([0,T],\R): \phi_t = x_0 + \int_0^t \bar{b}(\phi_s)\,ds +\int_0^t \bar{\s}(\phi_s)f_s\,ds\} \neq \emptyset$, and $I(\phi)=\infty$ otherwise. In particular, $J(\phi) < \infty$ if and only if $\phi$ solves \eqref{HoelderIntEq} for some $f\in L^2([0,T],\R)$. Let $\phi\in \mathcal{W}^1$ be such that $J(\phi) < \infty$. Then $\phi$ solves \eqref{HoelderIntEq} for some $f\in L^2([0,T],\R)$, hence
\[
	\dot{\phi}_t = b(\phi_t)+\s(\phi_t)f_t \quad\text{for almost every } t\in [0,T],
\]
and $\phi$ is absolutely continuous on $[0,T]$ with $\phi_0 = x_0$. By Proposition~\ref{PropUniqueness}, $\phi_t>0$ for all $t\in[0,T]$, thus $\bar{\s}(\phi_t)\neq0$,
hence
\[
	\frac{\dot{\phi}_t-\bar{b}(\phi_t)}{\bar{\s}(\phi_t)} = f_t
\]
for almost every $t\in [0,T]$. It follows that
\[
\frac{1}{2}\int_0^T |f_t|^2\,dt = \frac{1}{2}\int_0^T\frac{(\dot{\phi}_t-\bar{b}(\phi_t))^2}{\bar{\s}^2(\phi_t)}dt,
\]
which implies $J(\phi) = I(\phi)$. On the other hand, if $\phi\in \mathcal{W}^1$ is absolutely continuous on $[0,T]$ with $\phi_0 = x_0$ such that $\int_0^T\frac{(\dot{\phi}_t-\bar{b}(\phi_t))^2}{\bar{\s}^2(\phi_t)}dt < \infty$, then
\[
	f_t \doteq \frac{\dot{\phi}_t-\bar{b}(\phi_t)}{\bar{\s}(\phi_t)}
\]
is well defined as an element of $L^{2}([0,T],\R)$ and $\phi$ solves \eqref{HoelderIntEq} with control $f$. It follows also in this case that $J(\phi) = I(\phi)$.
\end{proof}


\begin{appendix}

\section{Appendix}

As above, let $(\mathcal{W}^m,\mathcal{B},\theta)$ be the canonical probability space for $m$-dimensional Brownian motion over the time interval $[0,T]$, and let $(\mathcal{G}_t)$ be the $\theta$-augmented filtration generated by the coordinate process $W$. Let $\mathcal{M}^2[0,T]$ denote the space of all $\R^{m}$-valued square-integrable $(\mathcal{G}_t)$-predictable processes. Theorem~3.1 in \citet{bouedupuis} provides the following representation for Laplace functionals of the Brownian motion $W$. For all $F\!: \mathcal{W}^m \rightarrow \R$ bounded and measurable,
\begin{equation} \label{RepresentationFormulaB}
	-\log\E\left[ e^{-F(W)}\right] = \inf_{v\in \mathcal{M}^2[0,T]} \E\left[\frac{1}{2}\int_0^T|v_s|^2\,ds + F\left(W+\int_{0}^{\bcdot} v_s\,ds\right)\right],
\end{equation}
where $\E$ denotes expectation with respect to the Wiener measure $\theta$.

Let $b(\cdot,\cdot)$ and $\s(\cdot,\cdot)$ be predictable functions from $[0,T]\times\mathcal{W}^d$ to $\R^d$ and to $\R^{d\times m}$, respectively. Fix $x\in\R^d$, and consider the stochastic differential equation
\begin{equation} \label{SDE}
      dX_t=b(t,X)\,dt+\s(t,X)\,dW_t
\end{equation}
for $t\in[0,T]$ and with initial condition $X_0 = x$. Suppose that Equation~\eqref{SDE} has a strong solution. Then there exists a $\B(\mathcal{W}^m)\setminus\B(\mathcal{W}^d)$-measurable function $h\!: \mathcal{W}^m \rightarrow \mathcal{W}^d$ such that $X= h[W]$ $\theta$-almost surely; for instance, Theorem 10.4 in \citet[p.\,126]{rogerswilliams}.
Hence, for any $F\!:\mathcal{W}^d\to\R$ bounded and measurable, $F\circ h$ is a bounded and measurable map from $\mathcal{W}^m$ into $\R$. By representation formula \eqref{RepresentationFormulaB} for Brownian motion, it follows that
\begin{multline}\label{RepresentationFormulaC}
	-\log\E\left[ e^{-F(X)}\right] = -\log\E\left[e^{-F\circ h(W)}\right]\\
	= \inf_{v\in \mathcal{M}^2[0,T]} \E\left[\frac{1}{2}\int_0^T|v_s|^2\,ds + F\circ h\left(W+\int_{0}^{\bcdot} v_s\,ds\right)\right].
\end{multline}

For $v\in \mathcal{M}^2[0,T]$, consider the controlled stochastic differential equation
\begin{equation}\label{ControlSDE}
      dX^v_t = b(t,X^v)\,dt+\s(t,X^v)v_t\,dt+\s(t,X^v)\,dW_t
\end{equation}
for $t\in[0,T]$ and with initial condition $X^v_0 = x$. If strong existence and pathwise uniqueness hold for Equation~\eqref{SDE}, then the term $F\circ h\left(W+\int_{0}^{\bcdot} v_s\,ds\right)$ in \eqref{RepresentationFormulaC} can be rewritten in terms of solutions to Equation~\eqref{ControlSDE}. We only need that identity for control processes $v$ with deterministically bounded $L^2$-norm. Lemma~\ref{LemmaControlSDERep} should be compared to Theorem~4.1 in \citet{bouedupuis}.

\begin{lemma} \label{LemmaControlSDERep}
	Let $v\in \mathcal{M}^2[0,T]$ be such that $\int_{0}^{T} |v_s|^{2}ds \leq N$ $\theta$-almost surely for some $N > 0$. Suppose that strong existence and pathwise uniqueness hold for Equation~\eqref{SDE} with initial condition $X_0 = x$. Then Equation~\eqref{ControlSDE} has a unique strong solution $X^v$ with $X^{v}_0 = x$ and
\[
	h\left(W+\int_{0}^{\bcdot} v_s\,ds\right) = X^{v}\qquad \theta\text{-a.s.}
\]
\end{lemma}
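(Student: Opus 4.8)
The plan is to deduce the statement from Girsanov's theorem, transferring solutions of \eqref{SDE} and \eqref{ControlSDE} back and forth by an equivalent change of measure. First I would observe that, since $\int_0^T |v_s|^2\,ds \leq N$ $\theta$-almost surely, Novikov's condition is trivially met, so the exponential
\[
	Z_t \doteq \exp\left(-\int_0^t v_s \cdot dW_s - \frac12 \int_0^t |v_s|^2\,ds\right), \qquad t\in[0,T],
\]
is a genuine $(\mathcal{G}_t,\theta)$-martingale. I then define the probability measure $\tilde\theta$ on $(\mathcal{W}^m,\mathcal{B})$ by $d\tilde\theta/d\theta = Z_T$. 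Since $Z_T > 0$ $\theta$-almost surely and $\E[Z_T]=1$, the measures $\theta$ and $\tilde\theta$ are mutually absolutely continuous; in particular, any statement that holds almost surely under one holds almost surely under the other. By Girsanov's theorem, the process $\hat W_t \doteq W_t + \int_0^t v_s\,ds$ is an $m$-dimensional $(\mathcal{G}_t)$-Brownian motion under $\tilde\theta$; note that $\hat W$ differs from $W$ by a $(\mathcal{G}_t)$-adapted process of bounded variation, so it generates, up to $\tilde\theta$-null sets, the same filtration as $W$, and its $\tilde\theta$-law is exactly Wiener measure.

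Next, existence. Because strong existence and pathwise uniqueness hold for \eqref{SDE}, the map $h$ is a non-anticipative solution functional: composing it with a Brownian motion on any filtered probability space produces the corresponding strong solution — this is the Yamada--Watanabe construction underlying the cited Theorem~10.4 in \citet{rogerswilliams}. Applying this on $(\mathcal{W}^m,\mathcal{B},(\mathcal{G}_t),\tilde\theta)$ with the Brownian motion $\hat W$, the process $X^v \doteq h(\hat W) = h\bigl(W+\int_0^{\bcdot} v_s\,ds\bigr)$ satisfies, $\tilde\theta$-almost surely,
\[
	X^v_t = x + \int_0^t b(s,X^v)\,ds + \int_0^t \s(s,X^v)\,d\hat W_s .
\]
Expanding $d\hat W_s = dW_s + v_s\,ds$ shows that $X^v$ solves \eqref{ControlSDE} with $X^v_0 = x$ under $\tilde\theta$. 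Since stochastic integrals are unchanged under an equivalent change of measure and $\theta \sim \tilde\theta$, the same identity holds $\theta$-almost surely; moreover $X^v$ is $(\mathcal{G}_t)$-adapted because $h$ is non-anticipative. This yields a strong solution of \eqref{ControlSDE} and, at the same time, the asserted identity $h\bigl(W+\int_0^{\bcdot} v_s\,ds\bigr) = X^v$ $\theta$-almost surely.

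Finally, uniqueness. If $X^v$ and $\tilde X^v$ are two strong solutions of \eqref{ControlSDE} on $(\mathcal{W}^m,\mathcal{B},(\mathcal{G}_t),\theta)$ driven by $W$, then rewriting $\int_0^\bcdot \s(s,\cdot)\,dW_s + \int_0^\bcdot \s(s,\cdot)v_s\,ds = \int_0^\bcdot \s(s,\cdot)\,d\hat W_s$ shows that, under $\tilde\theta$, both solve \eqref{SDE} driven by the $\tilde\theta$-Brownian motion $\hat W$. Pathwise uniqueness for \eqref{SDE} then forces $X^v = \tilde X^v$ $\tilde\theta$-almost surely, hence $\theta$-almost surely. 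I expect the main obstacle to be the bookkeeping around the solution functional $h$: one must make sure that $h$ genuinely acts as a strong solution map for the Brownian motion $\hat W$ — not merely for the coordinate process under Wiener measure — and is non-anticipative, so that $h(\hat W)$ is $(\mathcal{G}_t)$-adapted; this comes together with the routine but necessary remark that stochastic integrals, and hence the solution property, transfer between the equivalent measures $\theta$ and $\tilde\theta$.
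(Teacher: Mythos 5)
Your proof is correct and follows essentially the same route as the paper's: a Girsanov change of measure under which $W+\int_0^{\bcdot} v_s\,ds$ becomes a Brownian motion, the non-anticipative solution-functional property of $h$ (Theorem~10.4 of Rogers--Williams) applied to that Brownian motion, and transfer of the solution property and of pathwise uniqueness back to $\theta$ via equivalence of the two measures. The only quibble is that you do not need (and may not have) equality of the filtrations generated by $\hat W$ and $W$; the inclusion of the former in $(\mathcal{G}_t)$ is all that is required for adaptedness of $h(\hat W)$, and that inclusion is immediate.
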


\begin{proof}
Define the process
\[
	\tilde{W}_t\doteq W_t+\int_0^t v_s\,ds,\quad t\in [0,T].
\]
Since $\int_0^t |v_s|^2\,ds\leq N$ $\theta$-almost surely, Girsanov's theorem is applicable; accordingly, there exists a measure $\gamma$ over $\mathcal{W}^m$ equivalent to $\theta$ such that $\tilde{W}$ is a $(\mathcal{G}_t)$-Brownian motion on $[0,T]$ (for instance, Theorem 5.2 in \citet[p.\,191]{karatzasshreve}). With respect to the measure $\gamma$ the controlled equation \eqref{ControlSDE} becomes
\begin{equation}\label{eq:WTilde}
    dX^{v}_t=b(t,X^v)\,dt + \s(t,X^v)\,d\tilde{W}_t.
\end{equation}
Uniqueness of solutions to Equation~\eqref{ControlSDE} follows by assumption of pathwise uniqueness for Equation~\eqref{SDE}. Indeed, if $X$ and $Y$ are two solutions of the Equation~\eqref{ControlSDE} with respect to $W$ and $\theta$, then they are solutions of Equation~\eqref{eq:WTilde} with respect to $\gamma$ and $\tilde{W}$. By pathwise uniqueness, $X$, $Y$ are indistinguishable.

We now prove existence of solutions. For continuous and $(\mathcal{G}_t)$-adapted processes $Z$, define the map $\Psi(Z)\!: \mathcal{W}^m \rightarrow \mathcal{W}^d$ according to
\[
	\Psi(Z)(\omega) \doteq x + \int_{0}^{\bcdot} b(s,h[Z(\omega)])ds + \left( \int_{0}^{\bcdot} \sigma(s,h[Z(\omega)])dZ_{s}\right)(\omega).
\]
The map $\Psi(Z)$ is certainly well defined when $Z$ is given by
\[
	Z_t(\omega)\doteq\tilde{W}_t(\omega) = \omega(t) + \int_{0}^{t} v_{s}(\omega)ds
\]
with $v \in \mathcal{M}^2[0,T]$. In this situation, for $\theta$-almost all $\omega \in \mathcal{W}^m$,
\begin{multline}\label{eq:claimtool}
   \Psi(\tilde{W})(\omega) = x + \int_{0}^{\bcdot} b(s,h[\tilde{W}(\omega)])ds\\
   + \int_{0}^{\bcdot} \sigma(s,h[\tilde{W}(\omega)])v_{s}(\omega)ds + \left( \int_{0}^{\bcdot} \sigma(s,h[\tilde{W}])dW_{s}\right)(\omega),
\end{multline}
where $W$ is the coordinate process on $\mathcal{W}^m$. Since $h[W]$ is a solution of Equation~\eqref{SDE}, by construction we have
\[
	h[W(\omega)]=\Psi(W)(\omega)\quad \text{for $\theta$-almost all } \omega\in \mathcal{W}^m.
\]
By Theorem 10.4 in \citet[p.\,126]{rogerswilliams}, $h(\tilde{W})$ satisfies
\[
	h[\tilde{W}] = x + \int_{0}^{\bcdot} b(s,h[\tilde{W}])ds + \int_{0}^{\bcdot}\sigma(s,h[\tilde{W}])d\tilde{W}_s\qquad \gamma\text{-a.s.}
\]
Since $\gamma$ is equivalent to $\theta$, it follows that
\[
	h[\tilde{W}]=\Psi(\tilde{W}) \qquad\theta\text{-a.s.}
\]
Thanks to \eqref{eq:claimtool}, this implies that, $\theta$-almost surely,
\begin{align*}
	h[\tilde{W}]_t &=\Psi(\tilde{W})_t\\
	&= x + \int_{0}^{t} b(s,h[\tilde{W}])ds + \int_{0}^{t} \sigma(s,h[\tilde{W}])v_{s}ds + \int_{0}^{t} \sigma(s,h[\tilde{W}])dW_{s},
\end{align*}
showing that $h[\tilde{W}]$ is a strong solution of Equation~\eqref{ControlSDE} with respect to $W$ and $\theta$. We have already seen that pathwise uniqueness holds for Equation~\eqref{ControlSDE}. It follows that
\[
	h\left(W+\int_{0}^{\bcdot} v_s\,ds\right) = X^{v}\qquad \theta\text{-a.s.}
\]
for any solution $X^{v}$ of \eqref{ControlSDE} with $X^{v}_0 = x$. 
\end{proof}

The following lemma provides a growth estimate if the coefficients $b$, $\s$ satisfy a sublinear growth condition. The proof uses only standard arguments including localization along times of first exit, the Burkholder-Davis-Gundy inequality, and Gronwall's lemma. 

\begin{lemma} \label{LemmaControlSDEGrowth}
	Let $v\in \mathcal{M}^2[0,T]$ be such that $\int_{0}^{T} |v_s|^{2}ds \leq N$ $\theta$-almost surely for some $N > 0$. Assume that $b$, $\s$ are such that, for some $M > 0$,
\[
	|b(t,\phi)| \vee |\s(t,\phi)| \leq M\left(1 + \sup_{s\in[0,t]} |\phi_s| \right)
\]
for all $t\in [0,T]$, all $\phi\in \mathcal{W}^d$. If $X^v$ is a solution of Equation~\eqref{ControlSDE} with $X^{v}_0 = x$, then for all $p\geq2$,
\[
	\E\left[\sup_{t\in[0,T]}|X^v_t|^p\right]\leq C_p(T,N,M)\left(1+|x|^p\right),
\]
where $C_p(T,N,M)$ is non-decreasing in each of its three arguments.
\end{lemma}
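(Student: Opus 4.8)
The plan is to localize so that everything is a priori finite, then derive a Gronwall inequality for $t\mapsto\E[\sup_{s\le t}|X^v_s|^p]$ directly from the integral form of Equation~\eqref{ControlSDE}. Fix $p\ge 2$. Since $X^v$ has continuous paths, the stopping times $\tau_R\doteq\inf\{t\in[0,T]:|X^v_t|\ge R\}\wedge T$ satisfy $\tau_R\uparrow T$ $\theta$-almost surely as $R\to\infty$. Writing, for $t\in[0,T]$,
\[
	X^v_t = x + \int_0^t b(s,X^v)\,ds + \int_0^t \s(s,X^v)v_s\,ds + \int_0^t \s(s,X^v)\,dW_s,
\]
applying the bound $|a+b+c+d|^p\le 4^{p-1}(|a|^p+|b|^p+|c|^p+|d|^p)$ at time $t\wedge\tau_R$, and taking the supremum over $[0,t\wedge\tau_R]$, one obtains four terms, each bounded in absolute value by $R^p$ on $\{s\le\tau_R\}$, hence integrable.

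I would then estimate the four terms. The initial term gives $4^{p-1}|x|^p$. For the drift, H\"older's inequality in time yields $\bigl(\int_0^{t\wedge\tau_R}|b(s,X^v)|\,ds\bigr)^p\le T^{p-1}\int_0^{t\wedge\tau_R}|b(s,X^v)|^p\,ds$, and the sublinear growth hypothesis together with $(1+a)^p\le 2^{p-1}(1+a^p)$ bounds the integrand by $2^{p-1}M^p(1+\sup_{u\le s\wedge\tau_R}|X^v_u|^p)$ on $\{s\le\tau_R\}$. For the control term, Cauchy--Schwarz and $\int_0^T|v_s|^2\,ds\le N$ give $\int_0^{t\wedge\tau_R}|\s(s,X^v)||v_s|\,ds\le N^{1/2}\bigl(\int_0^{t\wedge\tau_R}|\s(s,X^v)|^2\,ds\bigr)^{1/2}$; raising to the $p$-th power, applying H\"older once more with exponent $p/2\ge 1$ (which is where $p\ge 2$ enters), and using the growth hypothesis produces a bound of the same shape as the drift term, with an extra factor $N^{p/2}$. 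For the stochastic integral, the Burkholder--Davis--Gundy inequality bounds $\E\bigl[\sup_{s\le t\wedge\tau_R}|\int_0^s\s(u,X^v)\,dW_u|^p\bigr]$ by $c_p\,\E\bigl[\bigl(\int_0^{t\wedge\tau_R}|\s(u,X^v)|^2\,du\bigr)^{p/2}\bigr]$ with $c_p$ depending only on $p$, and this is controlled exactly as the control term (without the $N^{p/2}$). Writing $g_R(t)\doteq\E[\sup_{s\le t\wedge\tau_R}|X^v_s|^p]$, which is finite since $g_R\le R^p$, collecting all contributions, and bounding the purely constant contributions by $1+|x|^p$, one arrives at
\[
	g_R(t)\le A_p(T,N,M)\,(1+|x|^p) + B_p(T,N,M)\int_0^t g_R(s)\,ds,\qquad t\in[0,T],
\]
where $A_p,B_p$ are explicit in $p$, $T$, $N$, $M$, $c_p$ and do not depend on $x$ or $R$.

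Finally, since $g_R$ is bounded, Gronwall's lemma gives $g_R(t)\le A_p(T,N,M)(1+|x|^p)e^{B_p(T,N,M)T}$ for all $t\in[0,T]$, uniformly in $R$. Letting $R\to\infty$ and using $\tau_R\uparrow T$ together with monotone convergence, $\E[\sup_{t\in[0,T]}|X^v_t|^p]\le C_p(T,N,M)(1+|x|^p)$ with $C_p(T,N,M)\doteq A_p(T,N,M)e^{B_p(T,N,M)T}$. There is no real obstacle here: the localization only serves to make $g_R$ finite before applying Gronwall, and the sole point requiring a little care is organizing the constants so that the asserted monotonicity of $C_p$ in each of $T$, $N$, $M$ is transparent. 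This reduces to checking that each factor appearing in $A_p$ and $B_p$, namely the powers $T^{p-1}$, $T^{p/2-1}$, $N^{p/2}$, the constant $M^p$, and the exponential $e^{B_p T}$, is non-decreasing in the relevant argument when $p\ge 2$, whence $A_p$, $B_p$, and therefore $C_p$, are non-decreasing in each of $T$, $N$, $M$.
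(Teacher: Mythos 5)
Your proposal is correct and follows essentially the same route as the paper: localize at the exit time $\tau_R$, bound the drift term by H\"older, the control term by Cauchy--Schwarz together with $\int_0^T|v_s|^2\,ds\le N$, and the martingale term by Burkholder--Davis--Gundy, then apply Gronwall to the (finite, $R$-uniformly bounded) quantity $\E[\sup_{s\le t\wedge\tau_R}|X^v_s|^p]$ and let $R\to\infty$. The only cosmetic difference is that you pass to the limit by monotone convergence where the paper invokes Fatou's lemma; both are valid here.
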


\end{appendix}


\bibliographystyle{abbrvnat}

\end{document}